\documentclass[12pt]{article}

\usepackage[utf8]{inputenc} 
\usepackage[T1]{fontenc}
\usepackage[english]{babel}

\usepackage{microtype} 
\usepackage{lmodern}

\usepackage{amsmath, amssymb, amsthm, mathtools}

\usepackage{graphicx}
\usepackage{wrapfig}
\usepackage{caption}
\usepackage{subcaption}
\usepackage{tikz}
\usetikzlibrary{decorations.pathreplacing,arrows.meta,positioning}

\usepackage[ruled,vlined]{algorithm2e}

\usepackage{enumitem}
\usepackage{multicol}
\usepackage[numbers,sort&compress]{natbib}

\usepackage[hidelinks]{hyperref}

\theoremstyle{plain}

\newtheorem{theorem}{Theorem}
\numberwithin{theorem}{section}

\newtheorem{conjecture}{Conjecture}
\numberwithin{conjecture}{section}

\newtheorem{corollary}{Corollary}
\numberwithin{corollary}{section}

\numberwithin{example}{section}

\newtheorem{lemma}{Lemma}
\numberwithin{lemma}{section}

\numberwithin{proposition}{section}

\numberwithin{problem}{section}

\newtheorem{remark}{Remark}
\numberwithin{remark}{section}

\newtheorem{claim}{Claim}
\numberwithin{claim}{theorem}

\newtheorem{definition}{Definition}
\numberwithin{definition}{section}

\author{
Hadeel Al Bazzal\thanks{KALMA, Faculty of Sciences, Lebanese University, Baalbek, Lebanon; LIB, Universit\'e Bourgogne Europe, Dijon, France}}

\usepackage{fullpage}
\begin{document}

\title{$t$-tone edge coloring of graphs}
\maketitle

\begin{abstract}
In this paper, we introduce the notion of $t$-tone edge coloring. A $t$-tone edge $k$-coloring of a graph $G$ assigns to each edge of $G$ a set of $t$ distinct colors from $\{1,\dots,k\}$ such that any two edges at distance $d$ share fewer than $d$ common colors. The $t$-tone chromatic index of $G$, denoted by $\tau'_t(G)$, is the minimum integer $k$ for which $G$ admits a $t$-tone edge $k$-coloring. We focus on the case $t=2$ and establish several upper bounds on $\tau'_2$. In particular, for every graph $G$ with maximum degree $\Delta(G)\ge2$, we prove that $\tau'_2(G)\le 6\Delta(G)-4$, improving the corresponding bound derived from the vertex analogue. We also show that every tree $T$ with $\Delta(T)\ge3$ satisfies $\tau'_2(T)=2\Delta(T)$. Furthermore, every planar graph $G$ satisfies $\tau'_2(G)\le \max\{41,3\Delta(G)+5\}$, while every outerplanar graph $G$ satisfies $\tau'_2(G)\le \max\{14,3\Delta(G)\}$. For subcubic graphs $G$, the vertex analogue yields $\tau'_2(G)\le12$. We improve this bound to $11$ for claw-free subcubic graphs and to $10$ for $2$-degenerate subcubic graphs. Finally, we propose two conjectures concerning optimal bounds for cubic and $K_4$-free cubic graphs, and establish them for series-parallel subcubic multigraphs and subcubic outerplanar graphs, respectively.
\end{abstract}

\textbf{Keywords:} $t$-tone edge coloring, $t$-tone chromatic index, subcubic graph, planar graph, series-parallel multigraph, outerplanar graph.\\

\noindent \textbf{AMS Subject Classification:} 05C15

\section{Introduction}

Throughout this paper, all graphs are assumed to be simple and finite unless stated otherwise. We denote the vertex set and edge set of a graph $G$ by $V(G)$ and $E(G)$, respectively. In a proper coloring of a graph $G$, a color (integer) is assigned to each vertex of $G$ such that adjacent vertices receive distinct colors. The chromatic number of $G$, denoted by $\chi(G)$, is the minimum integer $k$ for which $G$ admits a proper coloring. Over time, this classical notion has inspired numerous generalizations. For instance, in a distance-$t$ coloring \cite{KS}, the condition is strengthened so that any two vertices at distance at most $t$ receive distinct colors. Another extension is the notion of a $t$-tone $k$-coloring \cite{BP, F}. A $t$-tone $k$-coloring of a graph $G$ assigns to each vertex of $G$ a set of $t$ distinct colors from $\{1,\dots,k\}$ such that any two vertices at distance $d$ share fewer than $d$ common colors. The minimum integer $k$ for which $G$ admits a $t$-tone $k$-coloring is called the $t$-tone chromatic number of $G$, denoted by $\tau_t(G)$. Note that the distance between two distinct vertices $u$ and $v$ in $G$, denoted by $d_G(u,v)$, is the length of a shortest path between $u$ and $v$ in $G$.

Edge analogues of several vertex coloring notions have also been studied. In particular, a proper edge coloring of a graph $G$ assigns colors to each edge of $G$ such that adjacent edges receive distinct colors. The chromatic index of $G$, denoted by $\chi'(G)$, is the minimum integer $k$ for which $G$ admits a proper edge coloring. Furthermore, a distance-$t$ edge coloring \cite{Kang} of a graph $G$ assigns a color to each edge of $G$ such that any two edges at distance at most $t$ receive distinct colors. The distance-$t$ chromatic index of $G$, denoted by $\chi'_t(G)$, is the minimum integer $k$ for which $G$ admits a distance-$t$ edge coloring. This parameter is related to graph powers, since $\chi'_t(G)=\chi((L(G))^t)$, where $L(G)$ denotes the line graph of $G$, and $G^t$ denotes the $t$-th power of a graph $G$, obtained by joining all pairs of vertices at distance at most $t$. In the context of distance-$t$ edge coloring, the distance between two edges $e,e'\in E(G)$ is defined as the distance between the corresponding vertices in the line graph $L(G)$ of $G$. We adopt the same definition throughout this paper and denote this distance by $d_G(e,e')$.

While $t$-tone coloring has been investigated for vertices, to the best of our knowledge, no edge analogue of this notion has yet been studied. In this paper, we introduce and study the notion of $t$-tone edge coloring.

\begin{definition}
Let $G$ be a graph and let $k,t \in \mathbb{N}$ with $1 \le t \le k$. Let $\binom{\{1,\dots,k\}}{t}$ denote the set of all $t$-element subsets of $\{1,\ldots,k\}$. A \emph{$t$-tone edge $k$-coloring} of $G$ is a mapping $f:E(G)\to \binom{\{1,\dots,k\}}{t}$ such that for any two edges $e,e'\in E(G)$, we have $|f(e)\cap f(e')|< d_G(e,e')$.

\noindent A graph admitting a $t$-tone edge $k$-coloring is said to be $t$-tone edge $k$-colorable. The $t$-tone chromatic index of $G$, denoted by $\tau'_t(G)$, is the minimum integer $k$ such that $G$ admits a $t$-tone edge $k$-coloring.
\end{definition}

Given a $t$-tone edge coloring $f$ of $G$, we call $f(e)$ the label of $e$ and the elements of $\{1,\dots,k\}$ colors. When no ambiguity arises, we omit set brackets in labels; that is, the label $\{a,b\}$ is written simply as $ab$. Edges $e$ and $e'$ (or vertices $x$ and $y$) are considered \textit{neighbors} when their distance is one, and \textit{second neighbors} when their distance is two.

The degree of a vertex $v$ in $G$, denoted by $d_G(v)$, is the number of edges incident with $v$. The maximum vertex degree of $G$ is denoted by $\Delta(G)$. Similarly, the degree of an edge $e$ of $G$, denoted by $d_G(e)$, is the number of edges adjacent to $e$. In particular, if $e=xy$, then $d_G(e)=d_G(x)+d_G(y)-2$. For an edge $xy\in E(G)$, we also define
\[
\eta_G(xy)
=
\left|\bigl(N_G(x)\cup N_G(y)\bigr)\setminus\{x,y\}\right|
=
|N_G(x)\cup N_G(y)|-2.
\] 
Graph families are denoted by $C_n$, $P_n$, and $K_n$ for the cycle, path, and complete graph on $n$ vertices, respectively, and by $K_{1,n}$ for the star on $n+1$ vertices. Clearly, $\tau'_t(K_{1,n})= tn$.

The notion of $t$-tone edge coloring generalizes proper edge coloring. In particular, when $t=1$, we obtain $\tau'_1(G)=\chi'(G)$ for every graph $G$. Note that, for each $t$, the parameter $\tau'_t$ is monotone under taking subgraphs: if $H$ is a subgraph of $G$, then $\tau'_t(H)\le \tau'_t(G)$. A natural lower bound for $\tau'_t(G)$ arises from the local obstruction $K_{1,\Delta(G)}$. Consequently, for every graph $G$, $\tau'_t(G) \ge t\,\Delta(G)$.

Since a $t$-tone edge coloring of a graph $G$ corresponds precisely to a $t$-tone coloring of its line graph $L(G)$, several results on $t$-tone coloring can be transferred directly to the edge-setting. In particular, using $\tau'_t(G)=\tau_t(L(G))$, we obtain the following bounds for $\tau'_t(G)$. Bickle and Phillips \cite{BP} proved that $\tau_t(G) \ge \tau_{t-1}(G) + 2$ for every nonempty graph $G$ and for $t \ge 2$. Applying this result to $L(G)$ immediately yields the following lower bound.

\begin{corollary}
Let $G$ be a nonempty graph and $t \ge 2$. Then $\tau'_t(G) \ge \tau'_{t-1}(G) + 2$.  
\end{corollary}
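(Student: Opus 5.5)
The plan is to derive the inequality directly from the result of Bickle and Phillips \cite{BP} via the identity $\tau'_t(G)=\tau_t(L(G))$, and to check that the hypotheses transfer correctly.

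\textbf{Step 1: verify the identity.} For every graph $G$ and every $t$ we have $\tau'_t(G)=\tau_t(L(G))$. This follows from the definitions. The edges of $G$ are exactly the vertices of $L(G)$, and the distance $d_G(e,e')$ is by definition the distance between $e$ and $e'$ in $L(G)$. Hence a map $f:E(G)\to\binom{\{1,\dots,k\}}{t}$ is a $t$-tone edge $k$-coloring of $G$ if and only if it is a $t$-tone $k$-coloring of $L(G)$.

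One subtlety concerns edges lying in different components of $G$. Their distance is infinite, so the constraint is vacuous. The same convention holds in $L(G)$, so the correspondence is unaffected.

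\textbf{Step 2: check the hypothesis of \cite{BP}.} The result $\tau_t(H)\ge\tau_{t-1}(H)+2$ for $t\ge2$ requires $H$ to be nonempty, meaning $H$ has at least one edge. This is the only point needing care, and the main obstacle is to settle which meaning of ``nonempty'' is intended.

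\emph{If ``nonempty'' means having at least one edge,} then a graph $G$ with an edge need not have a line graph with an edge. For example, $L(K_2)=K_1$. So we must treat the case where $L(G)$ is edgeless, that is, where $G$ is a matching (plus isolated vertices). In that case every two edges of $G$ are at infinite distance, so each edge can receive the set $\{1,\dots,t\}$. This gives $\tau'_t(G)=t$ and $\tau'_{t-1}(G)=t-1$, and the claimed bound $\tau'_t(G)\ge\tau'_{t-1}(G)+2$ fails. The corollary must therefore be read with ``nonempty'' referring to $L(G)$, i.e.\ $G$ has two adjacent edges, or equivalently $\Delta(G)\ge2$. This case also makes clear that the Bickle–Phillips bound genuinely needs an edge, so the hypothesis cannot simply be dropped.

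\emph{If ``nonempty'' means having at least one vertex,} the transfer is immediate.

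I would state explicitly that the condition is imposed on $L(G)$.

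\textbf{Step 3: conclude.} Under this hypothesis, apply \cite{BP} to $H=L(G)$:
\[
\tau'_t(G)=\tau_t(L(G))\ge\tau_{t-1}(L(G))+2=\tau'_{t-1}(G)+2 .
\]
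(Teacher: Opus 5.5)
Your argument is the paper's own: transfer the Bickle--Phillips inequality through $\tau'_t(G)=\tau_t(L(G))$. Your extra care is justified, and your hypothesis $\Delta(G)\ge 2$ is the right one: the transfer needs $L(G)$ to have an edge, and for a matching such as $K_2$ the stated inequality genuinely fails ($\tau'_t=t$ and $\tau'_{t-1}=t-1$, consistent with the paper's formula $\tau'_t(P_2)=t$ and its remark that $\tau'_2(G)=2$ when $\Delta(G)=1$), a point the paper's one-line derivation passes over.

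The only thing to discard is the ``at least one vertex'' branch, which is not a viable reading. Under it the Bickle--Phillips bound itself fails for $K_1$ ($\tau_t(K_1)=t<\tau_{t-1}(K_1)+2$), and a vertex in $G$ does not even guarantee a vertex in $L(G)$, so nothing transfers ``immediately'' there.
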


Fonger et al.\ \cite{F} established that $\tau_t(G) \le \tau_{t-1}(G) + \chi(G^t)$ for $t \ge 2$. This allows us to obtain the following upper bound on $\tau'_t(G)$.

\begin{corollary}
Let $G$ be a graph and $t \ge 2$. Then $\tau'_t(G) \le \tau'_{t-1}(G) + \chi'_t(G)$.
\end{corollary}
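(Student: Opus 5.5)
The plan is to apply the result of Fonger et al.\ \cite{F} to the line graph $L(G)$, in the same way the preceding corollary was obtained from the result of Bickle and Phillips. The argument rests on three facts already noted in the paper:
\[
\tau'_t(G)=\tau_t(L(G)), \qquad \tau'_{t-1}(G)=\tau_{t-1}(L(G)), \qquad \chi'_t(G)=\chi\bigl((L(G))^t\bigr).
\]
The first two hold because a $t$-tone edge coloring of $G$ is precisely a $t$-tone coloring of $L(G)$. This uses the convention that $d_G(e,e')$ equals the distance between $e$ and $e'$ as vertices of $L(G)$. The third identity was stated when distance-$t$ edge colorings were introduced.

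First, I will check that the statement of Fonger et al.\ has no hypotheses that $L(G)$ could fail to satisfy. It is stated for every graph and every $t\ge 2$. The degenerate case where $G$ has no edges needs a separate look: then $L(G)$ is the empty graph, every parameter involved is $0$, and the inequality holds trivially. Applying the theorem with $H=L(G)$ in place of $G$ gives
\[
\tau_t(L(G))\le \tau_{t-1}(L(G))+\chi\bigl((L(G))^t\bigr).
\]
Substituting the three identities then yields $\tau'_t(G)\le \tau'_{t-1}(G)+\chi'_t(G)$, which is the claim.

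The only point that needs care is matching the distance conventions. Distance between edges must be measured in $L(G)$, both in the $t$-tone condition and in the definition of distance-$t$ edge coloring. Under that convention, the $t$-th power $(L(G))^t$ joins exactly the pairs of edges at distance at most $t$, so $\chi\bigl((L(G))^t\bigr)$ is exactly $\chi'_t(G)$. The paper fixes this convention explicitly, so no real obstacle remains.

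If a self-contained argument were wanted instead, I would reproduce the Fonger et al.\ construction directly on edges. Take a $(t-1)$-tone edge coloring using colors $\{1,\dots,\tau'_{t-1}(G)\}$. Take a distance-$t$ edge coloring using fresh colors $\{\tau'_{t-1}(G)+1,\dots,\tau'_{t-1}(G)+\chi'_t(G)\}$. Give each edge the union of its two labels. Now consider two edges at distance $d$. If $d\le t$, their fresh colors differ, so they share at most the $d-1$ colors allowed by the old condition. If $d>t$, they share at most $(t-1)+1=t<d$ colors. So the union labeling is a valid $t$-tone edge coloring.
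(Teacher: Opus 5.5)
Your proposal is correct and matches the paper's argument, which obtains the corollary by applying the Fonger et al.\ inequality to $L(G)$ via $\tau'_t(G)=\tau_t(L(G))$ and $\chi'_t(G)=\chi((L(G))^t)$. Your self-contained union-of-labels construction is also valid: it reproduces the underlying argument directly on edges and removes the reliance on the unpublished source.
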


Another general upper bound follows from a result established in \cite{C1}, which gives the best known upper bound for $\tau_t(G)$. It states that for every integer $t$ and every nonempty graph $G$, $\tau_t(G) \le (t^2 + t)\Delta(G)$. Applying this inequality to the line graph $L(G)$ and using the fact that $\Delta(L(G)) \le 2\Delta(G) - 2$, we obtain the following upper bound.

\begin{corollary}
For every integer $t$ and every nonempty graph $G$, $\tau'_t(G) \le (t^2 + t)(2\Delta(G) - 2)$.
\end{corollary}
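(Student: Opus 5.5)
The corollary will follow by transferring the vertex result of \cite{C1} to the line graph, exactly as the previous two corollaries were obtained. The plan has three steps: identify $t$-tone edge colorings of $G$ with $t$-tone colorings of $L(G)$, bound $\Delta(L(G))$ in terms of $\Delta(G)$, and substitute into $\tau_t(H)\le (t^2+t)\Delta(H)$.

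First, I will record the identity $\tau'_t(G)=\tau_t(L(G))$. The edges of $G$ are exactly the vertices of $L(G)$, and by definition $d_G(e,e')=d_{L(G)}(e,e')$. Hence a map $f:E(G)\to\binom{\{1,\dots,k\}}{t}$ is a $t$-tone edge $k$-coloring of $G$ precisely when it is a $t$-tone $k$-coloring of $L(G)$. For edges in different components the distance is infinite, so the condition holds vacuously in both settings.

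Second, I will bound the maximum degree of the line graph. An edge $e=xy$ has degree $d_{L(G)}(e)=d_G(x)+d_G(y)-2\le 2\Delta(G)-2$, since $G$ is simple. Therefore $\Delta(L(G))\le 2\Delta(G)-2$.

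Third, I will apply the cited bound to $H=L(G)$, which requires $L(G)$ to be nonempty, that is, $\Delta(G)\ge 2$. Monotonicity of the right-hand side in $\Delta$ then gives
\[
\tau'_t(G)=\tau_t(L(G))\le (t^2+t)\Delta(L(G))\le (t^2+t)(2\Delta(G)-2).
\]

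The main obstacle is the degenerate case. If $\Delta(G)\le 1$, then $L(G)$ has no edges and the right-hand side of the claimed inequality is $0$. Yet $\tau'_t(G)\ge t$ whenever $G$ has an edge, so the inequality fails as literally stated. The statement therefore implicitly assumes $\Delta(G)\ge 2$, or equivalently that $L(G)$ is nonempty. I will state this hypothesis explicitly and note that for a matching $G$ one trivially has $\tau'_t(G)=t$.
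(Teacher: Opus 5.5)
Your proof is correct and follows the same line-graph transfer the paper uses: $\tau'_t(G)=\tau_t(L(G))$, then $\Delta(L(G))\le 2\Delta(G)-2$, then the Cranston--Kim--Kinnersley bound applied to $L(G)$. Your remark on the degenerate case is also right and points to a flaw in the statement. When $\Delta(G)=1$, $L(G)$ is edgeless, so the cited bound does not apply, and the inequality fails because $\tau'_t(G)=t>0=(t^2+t)(2\Delta(G)-2)$. The hypothesis $\Delta(G)\ge 2$ (equivalently, $L(G)$ nonempty) is therefore genuinely needed, and the paper's statement omits it.
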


The $t$-tone chromatic number is known exactly for all values of $t$ in the case of paths. Bickle and Phillips \cite{BP} proved that $\tau_t(P_n) =\sum_{i=0}^{n-1} \max \Bigl\{ 0,\, t - \binom{i}{2} \Bigr\}$ for every integer $n\ge1$. Since $L(P_n)=P_{n-1}$, we immediately obtain the following corollary.

\begin{corollary}\label{co1}
Let $P_n$ be a path of order $n\ge 2$. Then, \[\tau'_t(P_n) = \sum_{i=0}^{n-2} \max \Bigl\{ 0,\, t - \binom{i}{2} \Bigr\}.\]
\end{corollary}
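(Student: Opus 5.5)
The plan is to derive Corollary~\ref{co1} directly from the identity $\tau'_t(G)=\tau_t(L(G))$ together with the theorem of Bickle and Phillips \cite{BP} on paths. First I will check that the identity $\tau'_t(G)=\tau_t(L(G))$ really holds, since it is the only nontrivial ingredient. Let $\phi:E(G)\to V(L(G))$ be the canonical bijection. By the paper's convention, $d_G(e,e')$ is defined as $d_{L(G)}(\phi(e),\phi(e'))$, and this includes the value $\infty$ when $e$ and $e'$ lie in different components. Hence a map $f:E(G)\to\binom{\{1,\dots,k\}}{t}$ satisfies $|f(e)\cap f(e')|<d_G(e,e')$ for all pairs of edges if and only if $f\circ\phi^{-1}$ is a $t$-tone $k$-coloring of $L(G)$. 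Taking minima over $k$ gives the equality.

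Next I identify the line graph of the path. For $n\ge 2$, write $P_n=v_1v_2\cdots v_n$ with edges $e_i=v_iv_{i+1}$ for $1\le i\le n-1$. Two edges $e_i$ and $e_j$ with $i\neq j$ share an endpoint exactly when $|i-j|=1$. So $L(P_n)$ is the path $e_1e_2\cdots e_{n-1}$, that is, $L(P_n)\cong P_{n-1}$, a path of order $n-1\ge 1$.

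Finally I apply the Bickle–Phillips formula $\tau_t(P_m)=\sum_{i=0}^{m-1}\max\{0,t-\binom{i}{2}\}$, which holds for every $m\ge 1$, with $m=n-1$. This gives
\[
\tau'_t(P_n)=\tau_t(P_{n-1})=\sum_{i=0}^{n-2}\max\Bigl\{0,\,t-\binom{i}{2}\Bigr\}.
\]

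The only point needing care is the boundary case $n=2$. There $L(P_2)=P_1$ is a single vertex, the formula gives $\max\{0,t\}=t$, and $\tau'_t(P_2)=t$ is clearly correct because a single edge needs exactly $t$ colors. I do not expect any real obstacle: the argument is a direct transfer, and the only substantive check is that the edge-distance convention matches the line-graph distance, which holds by definition.
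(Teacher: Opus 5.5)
Your proposal is correct and follows the same route as the paper: use the identity $\tau'_t(G)=\tau_t(L(G))$, observe that $L(P_n)\cong P_{n-1}$, and apply the Bickle--Phillips formula for paths of order $n-1$. Your explicit check that the edge-distance convention matches line-graph distance, and your check of the case $n=2$, simply spell out details the paper leaves implicit.
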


\section{2-tone edge coloring of graphs}

In this section, we study the case $t=2$. In this case, a $2$-tone edge coloring of a graph $G$ assigns to each edge of $G$ a set of two colors such that adjacent edges receive disjoint sets and edges at distance two receive distinct sets.

A natural starting point is the class of cycles. In \cite{BP}, it was shown that $\tau_2(C_n)$ equals six when $n\in \{3,4,7\}$, and equals five otherwise. Since $L(C_n)=C_n$, we immediately obtain the following corollary.

\begin{corollary}\label{cc1}
For the cycle $C_n$, 
\[
\tau'_2(C_n)=
\begin{cases}
6, & \text{if } n \in \{3,4,7\}\\
5, & \text{otherwise}
\end{cases}.
\]
\end{corollary}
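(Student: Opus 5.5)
The plan is to transfer the vertex result through the line graph, as the preceding corollaries do. I first note that $L(C_n)\cong C_n$ for every $n\ge 3$. The edges $e_1,\dots,e_n$ of $C_n$, taken in cyclic order, are the vertices of $L(C_n)$. Two edges are adjacent exactly when they are consecutive, so $L(C_n)$ is again an $n$-cycle. Distances also transfer correctly: by the convention adopted in the introduction, $d_{C_n}(e,e')=d_{L(C_n)}(e,e')$. So a $2$-tone edge $k$-coloring of $C_n$ is literally the same object as a $2$-tone $k$-coloring of the cycle $L(C_n)\cong C_n$. This gives $\tau'_2(C_n)=\tau_2(L(C_n))=\tau_2(C_n)$.

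Next I invoke the result of Bickle and Phillips \cite{BP}: $\tau_2(C_n)=6$ for $n\in\{3,4,7\}$ and $\tau_2(C_n)=5$ otherwise. Substituting this into the identity above gives the stated case formula immediately.

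There is no real obstacle here, since the statement is a direct transfer. The only point needing care is that the isomorphism $L(C_n)\cong C_n$ preserves distances, which is automatic for a graph isomorphism.

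For a self-contained sanity check one could verify the bounds directly. For the lower bound $5$: with only four colors, the labels from $\binom{\{1,2,3,4\}}{2}$ on consecutive edges must be complementary pairs. Then the edges $e_i$ and $e_{i+2}$, which are at distance $2$ in a cycle of length at least $5$, get equal labels, which is forbidden. For $n=3,4$ the lower bound $6$ follows because every two edges are within distance $2$, so all labels must be distinct and adjacent labels disjoint. For the upper bounds, explicit periodic label sequences on $5$ colors (period $5$) can be combined to cover the remaining lengths. I would simply cite \cite{BP} rather than redo this casework.
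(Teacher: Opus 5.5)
Your proof is correct and matches the paper's argument: the paper also derives the corollary directly from $L(C_n)\cong C_n$, where edge distances in $C_n$ are by definition vertex distances in $L(C_n)$, together with the Bickle--Phillips values of $\tau_2(C_n)$ from \cite{BP}. Your optional direct check is not needed, and its upper-bound part is too vague to stand alone (it is unclear how period-$5$ sequences would give lengths such as $6$, $8$, or $9$), but since you rely on \cite{BP} this does not affect the proof.
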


For a graph $G$, we have $\tau'_2(G)=0$ if $G$ is empty, and $\tau'_2(G)=2$ when $\Delta(G)=1$. If $\Delta(G)=2$, then $G$ is a disjoint union of paths and cycles, and hence $\tau'_2(G)\in\{5,6\}$ by Corollaries \ref{co1} and \ref{cc1}. Therefore, it suffices to consider graphs with $\Delta(G)\ge 3$.

The primary approach in this paper involves constructing a $t$-tone edge $k$-coloring of a graph iteratively. 

\begin{definition}
A \emph{partial $t$-tone edge $k$-coloring} of a graph $G$ is a function 
$f : E' \to \binom{\{1,\dots,k\}}{t}$, where $E' \subseteq E(G)$, such that for every pair of edges $e,e' \in E'$, we have $|f(e)\cap f(e')| < d_G(e,e')$. Edges in $E(G) \setminus E'$ are called \emph{uncolored}.
\end{definition}

Note that a $t$-tone edge $k$-coloring of a subgraph $H \subseteq G$ is not necessarily a partial $t$-tone edge $k$-coloring of $G$, as the distance between a pair of edges in $G$ may be strictly less than their distance in $H$. This issue, however, does not arise when the subgraph is formed by vertex deletion.

\begin{remark}\label{r1}
 Let $G'$ be obtained from a graph $G$ by deleting a vertex. Since removing a vertex does not decrease the distance between any pair of edges in $G'$, any $t$-tone edge $k$-coloring of $G'$ is a partial $t$-tone edge $k$-coloring of $G$.
\end{remark}

We now introduce the following terminology.

\begin{definition}
Let $f$ be a partial $2$-tone edge coloring of a graph $G$, and let $e \in E(G)$ be an uncolored edge. A \emph{valid label} for $e$ is a label by which $f$ can be extended to $e$. A color is \emph{free at $e$} if it does not appear on any edge adjacent to $e$. Let $f_r(e)$ denote the number of free colors at $e$. A \emph{candidate label} for $e$ is a label consisting only of free colors. 
\end{definition}

The notions of partial coloring, valid label, free color, and candidate label were introduced by Cranston, Kim, and Kinnersley \cite{C1} for $t$-tone coloring; the definitions above are their edge analogues.

As mentioned in the introduction, every graph $G$ satisfies $\tau'_t(G)\ge t\Delta(G)$. In particular, for $t=2$, we obtain $\tau'_2(G)\ge 2\Delta(G)$. We now show that this bound is sharp for trees with maximum degree at least three.

\begin{theorem}
Let $T$ be a tree with $\Delta(T)\ge 3$. Then $\tau'_2(T)=2\Delta(T)$.
\end{theorem}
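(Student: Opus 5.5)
The lower bound $\tau'_2(T)\ge 2\Delta(T)$ holds for every graph, as noted above. So it suffices to construct a $2$-tone edge $k$-coloring of $T$ with $k=2\Delta$, where $\Delta=\Delta(T)\ge 3$. I would argue by induction on the number of vertices using Remark~\ref{r1}. Alternatively, I would color greedily along a breadth-first ordering from a root, which is the same argument. Concretely, root $T$ at a vertex $r$. We color edges level by level, so that when we process a vertex $v$ with parent edge $pv$ already colored, we must choose labels for the child edges $vw_1,\dots,vw_s$ at once, where $s\le \Delta-1$ (or $s\le\Delta$ if $v=r$).

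At the root we simply partition $\{1,\dots,2\Delta\}$ into $\Delta$ disjoint pairs and assign them to the (at most $\Delta$) root edges. For the inductive step at a non-root vertex $v$ with parent $p$, the constraints on the new edges $vw_i$ are the following. They must be pairwise disjoint and disjoint from $f(pv)$. They must also be distinct from the labels of edges at distance two, that is, the other edges at $p$ (the siblings $pu$, $u\neq v$, and the edge from $p$ to its own parent). Edges below $w_i$ are not yet colored. Because $T$ is a tree, no other already-colored edges lie within distance two. The free colors at the new edges number at least $2\Delta-2$, since only $f(pv)$ is excluded. We need $s\le\Delta-1$ pairwise disjoint pairs from these $2\Delta-2$ colors, avoiding a forbidden family $\mathcal F$ of at most $\Delta-1$ labels. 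Note that the labels in $\mathcal F$ are pairwise disjoint and disjoint from $f(pv)$, since they are all adjacent at $p$. We may also relabel, so it suffices to choose a perfect matching on the $2\Delta-2$ free colors that avoids every pair in $\mathcal F$, and use $s$ of its pairs.

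The key step is therefore a combinatorial lemma. Let $S$ be a set of $2m$ colors, with $m=\Delta-1\ge 2$, and let $\mathcal F$ be a family of pairwise disjoint pairs in $S$ (at most $m$ of them, a partial perfect matching $M$). Then there is a perfect matching of $S$ sharing no pair with $M$. I would prove this by extending $M$ to a perfect matching $\{a_1b_1,\dots,a_mb_m\}$ of $S$ and taking the ``shifted'' matching $\{b_1a_2, b_2a_3,\dots,b_ma_1\}$. For $m\ge2$ this shares no pair with the extended matching. This is exactly where $\Delta\ge3$ is needed: when $m=1$ the only pair on $S$ might be forbidden. That failure matches the exception $\tau'_2(P_n)$ being larger than $4$ for long paths.

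The main obstacle I expect is checking that the distance-two constraints are fully captured by $\mathcal F$. For each new edge $vw_i$, the edges at distance exactly two are the edges at $p$ other than $pv$, together with the edges at the $w_j$ (uncolored) and the edges at $w_i$ (uncolored). So only the edges at $p$ matter, and these are pairwise disjoint labels, as required. Labels of distinct new edges are disjoint, hence automatically distinct. One must also handle $v$ whose parent $p$ is the root, but the argument is identical. The degenerate case where $v$ has fewer than $\Delta-1$ children only makes things easier. With the lemma in hand, the induction completes the coloring with $2\Delta$ colors, giving $\tau'_2(T)=2\Delta(T)$.
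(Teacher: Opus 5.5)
Your proof is correct, but it takes a different route from the paper's.

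\textbf{The paper's route.} The paper argues by minimal counterexample. It roots $T$ at a vertex of degree $\Delta$, deletes a leaf $v_1$ whose parent $v$ is a deepest non-leaf vertex, and extends the coloring of $T-v_1$ to the single edge $vv_1$.
\begin{itemize}
\item When $d_T(v)=2$, it counts candidate labels: $\binom{2\Delta-2}{2}>\Delta-1$.
\item When $d_T(v)\ge 3$, possibly only two free colors $a,b$ remain. If $ab$ is blocked by an edge at $p$, the paper recolors a sibling leaf edge $vv_2$ from $cd$ to $ca$ and gives $db$ to $vv_1$. This works because $a$ and $b$ occur at $p$ only on the edge labeled $ab$.
\end{itemize}

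\textbf{Your route.} You color top-down and assign all child edges at $v$ simultaneously, so no recoloring is ever needed. All the work is done by two ingredients:
\begin{itemize}
\item the observation that the labels at $p$ other than $f(pv)$ form a partial matching on the $2\Delta-2$ colors outside $f(pv)$;
\item your cyclic-shift lemma, which for $m=\Delta-1\ge 2$ produces a perfect matching on those colors avoiding that partial matching.
\end{itemize}
Your distance check is also right. When $v$ is processed, the only colored edges within distance two of a new edge are the edges at $p$ other than $pv$. Sibling subtrees and edges above $p$ are at distance at least three, which imposes no constraint when $t=2$.

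\textbf{What each approach buys.} Your argument gives an explicit linear-time construction and works from any root. It also isolates exactly where $\Delta\ge 3$ is used: the shift fails for $m=1$, which is consistent with $\tau'_2(P_n)=5$. The paper's argument stays within the one-edge-at-a-time extension framework (free colors, candidate labels, minimal counterexamples) that it reuses in later theorems, at the cost of an ad hoc swap.

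Of the two framings you mention, the breadth-first one that colors all children of $v$ at once is the complete argument. An induction that deletes one leaf and adds one edge at a time would again require a swap like the paper's.
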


\begin{proof}
Let $\Delta=\Delta(T)$. Since $T$ contains $K_{1,\Delta}$ as a subgraph and $\tau'_2(K_{1,\Delta})=2\Delta$, the monotonicity of $\tau'_2$ under subgraphs implies that $\tau'_2(T)\ge 2\Delta$. It remains to show that $T$ admits a $2$-tone edge coloring using $2\Delta$ colors.

\noindent Suppose, for a contradiction, that this is false, and let $T$ be a counterexample with the minimum number of vertices. Then $T$ is not a star, and hence it contains at least two internal vertices. Fix a root $r$ with $d_T(r)=\Delta$, and let $v$ be a non-leaf vertex at maximum distance from $r$. Then every neighbor of $v$ other than its parent $p$ (possibly $p=r$) is a leaf. Let $v_1$ be one such leaf, and set $T' = T - \{v_1\}$. Since $r \in T'$, $\Delta(T')=\Delta$. By the minimality of $T$, the tree $T'$ admits a $2$-tone edge $2\Delta$-coloring $f$. By Remark \ref{r1}, $f$ is a partial $2$-tone edge coloring of $T$.

\noindent We claim that $d_T(v)\ge 3$. Otherwise, since $d_T(v)\ge 2$, we have $d_T(v)=2$, and thus $v_1$ and $p$ are the only neighbors of $v$ in $T$. Hence, $f_r(vv_1)\ge 2\Delta-2$. Thus, there are at least $\binom{2\Delta-2}{2}$ candidate labels for $vv_1$. The second neighbors of $vv_1$ are precisely the edges incident with $p$ other than $vp$, and there are at most $\Delta-1$ such edges. Since $\binom{2\Delta-2}{2} > \Delta-1$ for $\Delta \ge 3$, at least one candidate label is valid for $vv_1$, a contradiction.

\noindent Thus, $d_T(v)\ge 3$, and hence $v$ is adjacent to at least two leaves. Consequently, $f_r(vv_1)\ge2$. Let $a$ and $b$ be two colors free at $vv_1$. If the label $ab$ does not appear on any second neighbor of $vv_1$, then $ab$ is a valid label for $vv_1$, a contradiction. Therefore, the label $ab$ must appear on a second neighbor of $vv_1$, which implies that there exists an edge incident with $p$, distinct from $vp$, assigned $ab$. Let $v_2$ be a leaf neighbor of $v$ distinct from $v_1$, and let $f(vv_2)=cd$. Since the second neighbors of $vv_2$ are precisely the edges incident with $p$ other than $vp$, one of which is colored $ab$, we recolor $vv_2$ with $ca$, and assign $db$ to $vv_1$, a contradiction.\end{proof}

Before proceeding further, we introduce the following additional terminology.

\begin{definition}
Let $e=xy$ be an edge of a graph $G$. A vertex $u\in V(G)$ is an \emph{intermediate vertex} of $e$ if $u$ is adjacent to $x$ or $y$ and is incident with a second neighbor of $e$.
\end{definition}

\begin{definition}
Let $f$ be a partial $2$-tone edge coloring of a graph $G$, and let $e \in E(G)$ be an uncolored edge. A second neighbor $e'$ of $e$ is called a \emph{forbidding edge} for $e$ if $f(e')$ is a candidate label for $e$. In this case, we say that $e'$ \emph{forbids} a label for $e$.

\noindent For any subset of candidate labels $L$, we say that $e'$ is a \emph{forbidding edge for $e$ under $L$} if $f(e') \in L$. In this case, we say that $e'$ \emph{forbids a label from $L$} for $e$.
\end{definition}

\begin{remark}\label{r2}
Let $f$ be a partial $2$-tone edge coloring of a graph $G$, and let $e\in E(G)$ be an uncolored edge. If $L$ is a family of pairwise intersecting candidate labels for $e$, then each intermediate vertex of $e$ can be incident to at most one forbidding edge for $e$ under $L$. Otherwise, two such edges would be adjacent, requiring their labels to be disjoint, which contradicts the fact that $L$ is pairwise intersecting.
\end{remark}

With these definitions in place, our first step is to determine the maximum size of a family of pairwise intersecting candidate labels for a given edge.

\begin{lemma}\label{p1}
Let $f$ be a partial $2$-tone edge coloring of a graph $G$, and let $e$ be an uncolored edge of $G$ with $f_r(e) = m > 2$. Let $L$ be a largest family of pairwise intersecting candidate labels for $e$. Then,
\[
|L| =
\begin{cases}
m - 1, & \text{if } m > 3 \\
3, & \text{if } m = 3
\end{cases}.
\]
\end{lemma}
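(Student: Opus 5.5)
Candidate labels for $e$ are exactly the $2$-element subsets of the set $F$ of $m$ free colors, so the lemma says: the largest pairwise intersecting family of edges of the complete graph $K_m$ on vertex set $F$ has $m-1$ edges when $m>3$ and $3$ edges when $m=3$. This is the graph version of Erdős–Ko–Rado. I plan to prove it directly.

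\emph{Lower bounds.} Fix a color $a\in F$. The star $\{ab : b\in F\setminus\{a\}\}$ consists of $m-1$ pairwise intersecting candidate labels. For $m=3$ there is also the triangle $\{ab,bc,ca\}$ on $F=\{a,b,c\}$, which has $3>m-1$ labels. Hence $|L|\ge m-1$ in general and $|L|\ge 3$ when $m=3$.

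\emph{Upper bound for $m=3$.} Here there are only $\binom{3}{2}=3$ candidate labels in total, so $|L|\le 3$.

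\emph{Upper bound for $m>3$.} Regard $L$ as the edge set of a graph $H$ on $F$ in which every two edges share a vertex. Take $ab\in L$. If every label of $L$ contains $a$, or every label contains $b$, then $L$ is a star and $|L|\le m-1$. Otherwise $L$ contains a label $ac$ with $c\ne b$ and a label $bd$ with $d\ne a$. Since $ac$ and $bd$ must intersect, $d=c$, so $ab,bc,ca\in L$. Any further label must meet all three of these pairs, and a $2$-set meeting each of $ab$, $bc$, $ca$ must lie inside $\{a,b,c\}$. Therefore $L=\{ab,bc,ca\}$ and $|L|=3\le m-1$ because $m\ge 4$. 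In both cases $|L|\le m-1$, which together with the star construction gives $|L|=m-1$.

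This case split (star versus triangle) is the only step with any content, and it is elementary. The point to state carefully is that for $m=4$ both extremal configurations have size $3=m-1$, so the formula still holds there, while for $m=3$ the triangle beats $m-1=2$.
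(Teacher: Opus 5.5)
Your proof is correct, but it takes a different route from the paper. The paper handles $m=3$ exactly as you do: all three $2$-subsets of a $3$-set pairwise intersect. For $m\ge 4$, however, it simply cites the Erd\H{o}s--Ko--Rado theorem with $n=m$ and $k=2$ (applicable since $n\ge 2k$) to get $\binom{m-1}{1}=m-1$.

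You instead prove this $k=2$ case from scratch via the star/triangle dichotomy: every pairwise intersecting family of $2$-sets is either contained in a star or equal to a triangle. The citation buys brevity. Your argument buys three things:
\begin{itemize}
\item it is self-contained;
\item it makes the lower bound (the star) explicit rather than leaving it implicit in the EKR statement;
\item it classifies all intersecting families of candidate labels.
\end{itemize}
The classification shows transparently why $m=3$ is exceptional (the triangle beats the star). It also shows why $m=4$, the boundary case $n=2k$ of EKR, has two non-isomorphic extremal families. The two shapes you identify are exactly the families the paper later uses in its extension arguments: stars such as $\{56,57,58,59\}$ and triangles such as $\{a10,a11,1011\}$.
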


\begin{proof}
If $m = 3$, then the result follows since all $2$-element subsets of a $3$-element set are pairwise intersecting. Assume now that $m \ge 4$. By the Erd\H{o}s--Ko--Rado Theorem \cite{E}, which states that for $n \ge 2k$, the largest intersecting family of $k$-subsets of an $n$-element set has size $\binom{n-1}{k-1}$, setting $n = m$ and $k = 2$ yields $|L| = \binom{m-1}{1} = m - 1$.\end{proof}

A natural approach for extending a partial $2$-tone edge coloring of a graph $G$ to an uncolored edge $e\in E(G)$ is to ensure that the number of candidate labels for $e$ exceeds the number of its second neighbors. When this condition fails, we instead exploit families of pairwise intersecting labels, as formalized in the following lemma.

\begin{lemma}\label{l1}
Let $f$ be a partial $2$-tone edge coloring of a graph $G$, and let $e$ be an uncolored edge of $G$. Let $L$ be a set of pairwise intersecting candidate labels for $e$. If $|L| > m$, where $m$ is the number of intermediate vertices of $e$, then $f$ can be extended to $e$ in at least $|L| - m$ distinct ways.
\end{lemma}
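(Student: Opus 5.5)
The plan is a counting argument: bound the number of labels in $L$ that are ruled out, and show that every remaining label of $L$ is valid for $e$.

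First I would check which conditions a label $\ell\in L$ must satisfy to be valid for $e$. The conditions come from edges at distance one and two from $e$; edges at distance three or more impose no constraint, since $|\ell\cap f(e')|\le 2<d_G(e,e')$.

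\emph{Distance one.} Here we need $\ell\cap f(e')=\emptyset$ for every colored neighbor $e'$ of $e$. This holds automatically, because $\ell$ is a candidate label and so consists only of free colors, which by definition appear on no edge adjacent to $e$.

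\emph{Distance two.} Here we need $|\ell\cap f(e')|<2$, that is, $\ell\ne f(e')$, for every colored second neighbor $e'$ of $e$. So $\ell$ fails to be valid only if $\ell=f(e')$ for some second neighbor $e'$. Since $\ell\in L$ is a candidate label, such an $e'$ is exactly a forbidding edge for $e$ under $L$. Consequently the invalid labels of $L$ are precisely the labels $f(e')$ with $e'$ a forbidding edge under $L$. Their number is at most the number of forbidding edges under $L$.

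Next I would bound the number of such forbidding edges by $m$. Every second neighbor $e'$ of $e$ is incident with at least one intermediate vertex of $e$. Indeed, if $e=xy$, then $e'$ is adjacent to some edge $e''$ adjacent to $e$. The edge $e''$ has an endpoint $x$ or $y$, and it shares with $e'$ a vertex $u$. We cannot have $u\in\{x,y\}$, since then $e'$ would be adjacent to $e$ (or equal to it). So $u$ is adjacent to $x$ or $y$ and is incident with $e'$, i.e. $u$ is an intermediate vertex of $e$.

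Now assign each forbidding edge under $L$ to one intermediate vertex incident with it. By Remark \ref{r2}, each intermediate vertex is incident to at most one forbidding edge under $L$, so this assignment is injective. Hence there are at most $m$ forbidding edges under $L$, and at most $m$ labels of $L$ are forbidden.

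It follows that at least $|L|-m>0$ labels of $L$ are not forbidden. By the discussion above, each of them is a valid label, and each gives a distinct extension of $f$ to $e$.

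The only delicate step is the claim that every second neighbor passes through an intermediate vertex. In particular one must rule out the degenerate case in which the shared vertex $u$ is an endpoint of $e$, and that is handled by the distance definition above.
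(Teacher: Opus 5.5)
Your proof is correct and follows the same route as the paper: Remark~\ref{r2} gives at most one forbidding edge under $L$ per intermediate vertex, so at most $m$ labels of $L$ are forbidden, and the rest are valid. You also make explicit three points the paper leaves implicit: candidate labels automatically satisfy the distance-one constraint, every second neighbor of $e$ passes through an intermediate vertex, and the number of forbidden labels is at most (not exactly) the number of forbidding edges.
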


\begin{proof}
Let $m$ be the number of intermediate vertices of $e$ in $G$. By Remark \ref{r2}, each such vertex is incident to at most one forbidding edge for $e$ under $L$; thus, $e$ has at most $m$ forbidding edges under $L$. Since each forbidding edge corresponds exactly to one label in $L$, at least $|L| - m$ labels in $L$ are valid for $e$. Each such label yields a distinct extension of $f$ to $e$.\end{proof}

Since the number of intermediate vertices of $e$ is at most $\eta_G(e)$, Lemma \ref{l1} immediately yields the following corollary.

\begin{corollary}\label{x}
Let $f$ be a partial $2$-tone edge coloring of a graph $G$, and let $e$ be an uncolored edge of $G$. Let $L$ be a set of pairwise intersecting candidate labels for $e$. If $|L| > \eta_G(e)$, then $f$ can be extended to $e$ in at least $|L| - \eta_G(e)$ distinct ways.
\end{corollary}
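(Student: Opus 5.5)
The statement is an immediate consequence of Lemma \ref{l1}, so the plan is simply to compare the number of intermediate vertices of $e$ with $\eta_G(e)$ and then invoke that lemma.

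Write $e=xy$ and let $m$ be the number of intermediate vertices of $e$. By definition, an intermediate vertex $u$ is adjacent to $x$ or to $y$, so $u\in N_G(x)\cup N_G(y)$.

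Next we show that $u\notin\{x,y\}$. Suppose $u=x$. Then $u$ is incident with a second neighbor $e'$ of $e$. Since $e'$ contains $x$, it would be adjacent to $e$ (or equal to it), which contradicts $d_G(e,e')=2$. The same argument excludes $u=y$.

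Hence every intermediate vertex lies in $\bigl(N_G(x)\cup N_G(y)\bigr)\setminus\{x,y\}$, and therefore $m\le\eta_G(e)$.

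Now assume $|L|>\eta_G(e)$. Then $|L|>\eta_G(e)\ge m$, so Lemma \ref{l1} applies. It says that $f$ extends to $e$ in at least $|L|-m$ distinct ways. Since $m\le\eta_G(e)$, we have $|L|-m\ge |L|-\eta_G(e)$, which gives the claimed bound.

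The only step that needs any care is checking that $x$ and $y$ are not themselves intermediate vertices, and this follows directly from the distance condition on second neighbors.
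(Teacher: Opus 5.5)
Your proof is correct and follows the paper's argument exactly: you bound the number of intermediate vertices by $\eta_G(e)$ and then apply Lemma~\ref{l1}. Your explicit check that the endpoints of $e$ cannot be intermediate vertices is a detail the paper leaves implicit.
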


To establish a baseline for our main result, we recall the best known upper bound on the $2$-tone chromatic number. It was shown in \cite{B1} that every graph $G$ with \(\Delta(G)\ge2\) satisfies
\[
\tau_2(G)\le 2\Delta(G)-1+\left\lceil \frac{1+\sqrt{1+8\Delta(G)(\Delta(G)-1)}}{2}\right\rceil .
\]
Applying this bound to the line graph $L(G)$ yields the following corollary.

\begin{corollary}\label{cc2}
For every graph $G$ with maximum degree $\Delta \ge 2$,
\[
\tau'_2(G) \le 4\Delta - 5 + \left\lceil \frac{1 + \sqrt{1 + 8(2\Delta - 2)(2\Delta - 3)}}{2} \right\rceil.
\]
\end{corollary}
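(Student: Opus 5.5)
This corollary is a direct transfer of the vertex bound from \cite{B1} to the line graph $L(G)$. It uses the identity $\tau'_2(G)=\tau_2(L(G))$: a $2$-tone edge coloring of $G$ is exactly a $2$-tone coloring of $L(G)$, since edge distance in $G$ is by definition vertex distance in $L(G)$. Write $D=\Delta(L(G))$. Each edge $xy$ of $G$ is adjacent to $d_G(x)+d_G(y)-2\le 2\Delta-2$ other edges, so $D\le 2\Delta-2$.

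The first step is to check that the bound from \cite{B1} applies to $L(G)$, i.e.\ that $D\ge 2$. This holds because $\Delta\ge 2$ guarantees a path $P_3$ in $G$, whence $D\ge 1$. If $D=1$, then every component of $G$ is a path with at most three vertices, so $\tau'_2(G)=4$ by Corollary~\ref{co1}. In that case $\Delta=2$, and the right-hand side of the claim equals $3+\lceil (1+\sqrt{17})/2\rceil=6\ge 4$. So we may assume $D\ge 2$ and apply the bound, which gives
\[
\tau'_2(G)\le 2D-1+\left\lceil \frac{1+\sqrt{1+8D(D-1)}}{2}\right\rceil .
\]

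The second step is monotonicity. The function $g(x)=2x-1+\lceil (1+\sqrt{1+8x(x-1)})/2\rceil$ is nondecreasing in $x\ge 1$, because both $2x-1$ and $x(x-1)$ are nondecreasing there and the ceiling preserves monotonicity. Hence $g(D)\le g(2\Delta-2)$. Substituting $x=2\Delta-2$ gives $2x-1=4\Delta-5$ and $x(x-1)=(2\Delta-2)(2\Delta-3)$, which is exactly the stated bound.

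The only point needing care is this monotonicity step together with the degenerate case $D<2$. Neither is a real obstacle; the rest is substitution.
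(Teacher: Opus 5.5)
Your proof is correct and follows the paper's route: you apply the bound from \cite{B1} to $L(G)$, using $\tau'_2(G)=\tau_2(L(G))$ and $\Delta(L(G))\le 2\Delta-2$. You also spell out two points the paper leaves implicit. One is that the bound is nondecreasing in the degree. The other is the degenerate case $\Delta(L(G))=1$, where the cited bound (stated only for maximum degree at least two) would fail. You handle both correctly.
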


We improve upon this result by providing the following upper bound.

\begin{theorem}\label{t1}
For every graph $G$ with $\Delta(G) \ge 2$, $\tau'_2(G) \le 6\Delta(G) - 4$.
\end{theorem}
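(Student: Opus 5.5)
We argue by minimal counterexample, in the style of the tree theorem. Set $k=6\Delta-4$ with $\Delta=\Delta(G)\ge 2$, and let $G$ be a counterexample with the fewest vertices. We delete a vertex $v$ and colour $G-v$, then extend to the edges at $v$ one at a time. Deleting $v$ cannot increase $\Delta$, so if $\Delta(G-v)\ge 2$ minimality gives a $2$-tone edge $k$-colouring of $G-v$. If $\Delta(G-v)\le1$, then $G-v$ is trivially colourable with $2\le k$ colours. By Remark \ref{r1}, this colouring is a partial $2$-tone edge $k$-colouring of $G$. The $\Delta=2$ case is already covered by Corollaries \ref{co1} and \ref{cc1}, since $6\ge 8-4$ fails only in that we need $\tau'_2\le 8$, which holds; so we may assume $\Delta\ge3$, although the argument below does not need this.

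Let the edges at $v$ be $vu_1,\dots,vu_s$ with $s\le\Delta$, and colour them in order. When we reach $e=vu_i$, its coloured neighbours are of two kinds:
\begin{itemize}[nosep]
\item at most $i-1\le\Delta-1$ already-coloured edges at $v$;
\item at most $\Delta-1$ edges at $u_i$.
\end{itemize}
These edges block at most $2(\Delta-1)+2(\Delta-1)=4\Delta-4$ colours. Hence
\[
f_r(e)\ge 6\Delta-4-(4\Delta-4)=2\Delta\ge 4,
\]
and Lemma \ref{p1} gives a pairwise intersecting family $L$ of candidate labels with $|L|\ge 2\Delta-1$. In fact we take $L$ to be a star: fix one free colour $a$ and let $L=\{ab: b \text{ free}, b\ne a\}$.

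We then apply Lemma \ref{l1}. The intermediate vertices of $e$ lie in $(N(v)\cup N(u_i))\setminus\{v,u_i\}$, but only coloured second neighbours matter, so count more carefully.
\begin{itemize}[nosep]
\item A neighbour $u_j$ of $v$ contributes edges $u_jw$ with $w\ne v$; these are coloured, since they lie in $G-v$.
\item A neighbour $w$ of $u_i$ contributes edges $wz$ with $z\ne u_i$.
\end{itemize}
This gives up to $(\Delta-1)+(\Delta-1)=2\Delta-2$ intermediate vertices, together with the coloured edges at $v$ themselves. Those edges at $v$ are neighbours of $e$, not second neighbours, so they impose only disjointness, which is already accounted for. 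The coloured edges at the $u_j$ with $j\ne i$ that are incident to $v$ are likewise adjacent to $e$. Thus $m\le 2\Delta-2<2\Delta-1\le|L|$, and Lemma \ref{l1} gives at least one valid label. This extends the colouring to every edge at $v$, contradicting the choice of $G$.

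The main obstacle is the bookkeeping of intermediate vertices. An intermediate vertex is any vertex adjacent to $v$ or $u_i$ that carries a second neighbour of $e$, and in this definition $u_j$ (for $j\ne i$) and the neighbours of $u_i$ may coincide. We must also make sure that edges joining two neighbours of $v$, or edges in triangles through $vu_i$, are not counted as second neighbours when they are actually neighbours. Remark \ref{r2} handles overlaps safely: each intermediate vertex kills at most one label of $L$, and the union has at most $|N(v)\cup N(u_i)|-2\le 2\Delta-2$ elements, which is exactly $\eta_G(e)$. So I would invoke Corollary \ref{x} directly, using $\eta_G(e)\le 2\Delta-2$ and $|L|\ge f_r(e)-1\ge 2\Delta-1$, and verify the free-colour count carefully. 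If that count turns out to be tight only at $i=s=\Delta$, the inequality still holds with margin $1$, which is all that is needed.
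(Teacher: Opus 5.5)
Your argument is correct and is essentially the paper's proof: the same count $f_r(e)\ge 6\Delta-4-2(2\Delta-2)=2\Delta$, a star family of at least $2\Delta-1$ pairwise intersecting candidate labels via Lemma~\ref{p1}, and Corollary~\ref{x} with $\eta_G(e)\le 2\Delta-2$. The only difference is packaging: the paper colors all edges greedily in an arbitrary order, since $d_G(e)\le 2\Delta-2$ holds no matter which edges are already colored, so your minimal-counterexample/vertex-deletion wrapper and your garbled (and unneeded) remark about the case $\Delta=2$ can simply be dropped.
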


\begin{proof}
Let $G$ be a graph with maximum degree $\Delta = \Delta(G)$. Let $k=6\Delta-4$ and let $E(G)=\{e_1,e_2,\dots,e_n\}$. Starting with all edges uncolored, we extend our partial edge coloring to $e_1,e_2,\dots,e_n$ in order. Consider an edge $e_i$ with $i\in\{1,\dots,n\}$. Since $d_G(e_i)\le 2\Delta-2$, it follows that $f_r(e_i)\ge 2\Delta$. Let $L$ be a largest family of pairwise intersecting candidate labels for $e_i$. Since $\Delta\ge2$, we have $f_r(e_i)\ge4$. Hence, Lemma~\ref{p1} implies that $|L|\ge2\Delta-1$. Since $\eta_G(e_i) \le 2\Delta -2$, we have $|L|>\eta_G(e_i)$. Therefore, by Corollary \ref{x}, at least one label in $L$ is valid for $e_i$, and the coloring can be extended to $e_i$. Proceeding iteratively, we obtain a $2$-tone edge $k$-coloring of $G$.\end{proof}

Theorem~\ref{t1} improves the bound of Corollary~\ref{cc2} whenever $\Delta(G)>4$. Asymptotically, our result reduces the leading constant from approximately $4 + 2\sqrt{2} \approx 6.83$ to $6$, yielding an improvement of about $0.83\Delta$ colors. When $\Delta(G) = 4$, both results yield the same bound, namely $20$. For $\Delta(G) \le 3$, however, Corollary \ref{cc2} provides a sharper bound ($13$) than the one obtained here ($14$).

We now turn to planar and outerplanar graphs. Our proofs rely on the minimal counterexample method together with the structural properties of planar and outerplanar graphs provided by the following lemmas.

\begin{lemma}[\cite{B}]\label{l2}
For every planar graph $G$ there exists $u \in V(G)$ such that $d_G(u) \le 5$ and $u$ has at most two neighbors with degree at least $11$.
\end{lemma}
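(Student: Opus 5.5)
This lemma is cited from \cite{B}, so within the paper it is taken as given. Still, here is how I would prove it directly: by discharging on a connected counterexample. Suppose $G$ is a planar graph in which every vertex $u$ with $d_G(u)\le 5$ has at least three neighbors of degree at least $11$. We may assume $G$ is connected and, after adding edges only between vertices that are not both small, a plane triangulation. Adding such edges never destroys the hypothesis, since degrees only increase and neighbor sets only grow. Some care is needed: adding an edge at a vertex of degree $5$ may push it to degree $6$. This only removes vertices from the set of small vertices, so the counterexample property is preserved. Hence we may work with a maximal plane graph satisfying the property, with minimum degree $3$, and with Euler's formula in the form
\[
\sum_{v\in V(G)} \bigl(d_G(v)-6\bigr) = -12 .
\]
Note that $G$ must contain some vertex of degree at most $5$, since the sum is negative.

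Assign each vertex $v$ the initial charge $d_G(v)-6$; the total charge is $-12$. Call a vertex \emph{small} if its degree is at most $5$ and \emph{big} if its degree is at least $11$. The rule is that each big vertex sends charge $1$ to each adjacent small vertex. A small vertex $v$ has charge at least $-3$ and, by the counterexample hypothesis, receives at least $1$ from each of its at least three big neighbors.

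The key point is that charge $-3$ occurs only at degree $3$. A vertex of degree $4$ starts at $-2$, and one of degree $5$ starts at $-1$. So every small vertex ends with nonnegative charge. Vertices of degree $6$ to $10$ are untouched and remain nonnegative.

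The main obstacle is controlling the big vertices: a big vertex $w$ of degree $d$ could have up to $d$ small neighbors, which would leave it with $d-6-d<0$. I would fix this using planarity and the triangulation structure. Around $w$, its neighbors form a cycle, and two consecutive small neighbors $x,y$ of $w$ form a triangle $wxy$. I would refine the rule so that a small vertex receives $1/2$ across each ``big--small'' edge per incident face (i.e.\ charge is sent through faces), and then argue that a small vertex with all its neighbors small cannot occur. If $x$ and $y$ are consecutive small neighbors of $w$, each needs three big neighbors out of at most five.

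A cleaner route, which I would try first, is to have big vertices send $1/2$ to each small neighbor. A small neighbor then receives at least $3/2$, which suffices only for degrees $4$ and $5$. So degree-$3$ vertices would need extra charge from faces, with each face initially carrying $2\cdot\deg(f)-6$ in the vertex-face version of the charging. With the weights
\[
\sum_{v}\bigl(d_G(v)-4\bigr)+\sum_{f}\bigl(d(f)-4\bigr)=-8,
\]
small vertices need at most $1$ in total, which makes the bookkeeping easy: each big vertex of degree $d\ge 11$ gives $1/3$ to each small neighbor, leaving it at least $d-4-d/3>0$. A small vertex needs charge equal to $4-d(v)$, which is at most $1$, and it receives $3\cdot 1/3=1$. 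Triangular faces have charge $-1$; I would let each vertex of degree at least $5$ give $1/3$ to each incident triangle, absorbing the leftover charge into big vertices (where $d-4-d/3-d/3>0$ for $d\ge 13$). I expect the degree-$11$ and degree-$12$ vertices, together with faces having only small or medium corners, to be the genuinely delicate case analysis, and I would resolve that by checking the tight configurations in \cite{B}.
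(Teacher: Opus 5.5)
\textbf{Verdict: genuine gap.} Your first sentence agrees with the paper, which only quotes this lemma from \cite{B} and gives no proof. The direct discharging argument you then offer does not close, and you end by deferring its hard cases to \cite{B}. As a proof it is therefore incomplete, and every rule set you propose fails somewhere:
\begin{itemize}
\item Your face-refined rule (a small vertex $x$ gets $1/2$ from a big neighbor $w$ through each of the two faces containing $wx$) is the same as sending $1$ per big--small edge. A big vertex of degree $d$ with $d$ small neighbors still ends at $d-6-d<0$.
\item With charges $d(v)-6$, receiving $3/2$ does not suffice at degree $4$, since $-2+3/2<0$.
\item In the vertex--face version, a degree-$3$ vertex of a counterexample has three big neighbors. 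Each triangle at it receives only $2/3$ and ends at $-1/3$, while the degree-$3$ vertex itself ends at exactly $0$ and cannot help.
\item In the same version, your bound for a degree-$11$ vertex with all neighbors small is $11-4-11/3-11/3=-1/3$.
\end{itemize}

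The missing idea is to let a big vertex pay only through faces whose third corner is also big.
\begin{itemize}
\item \emph{Reduction.} Triangulate first. Any added edge preserves the hypothesis: a vertex that is small afterwards was small before and keeps its big neighbors. So no restriction on which pairs you join is needed. Keep the charges $d(v)-6$, with total $-12$.
\item \emph{Rule.} For every face $wxy$ with $x$ small and $w,y$ big, each of $w$ and $y$ sends $1/2$ to $x$.
\item \emph{Small vertices.} A small vertex $x$ of degree $k$ receives $1$ for each edge of its link $k$-cycle whose two ends are both big. At least three of its $k\le 5$ link vertices are big, so there are at least $3,2,1$ such edges for $k=3,4,5$. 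This exactly covers the deficits $3,2,1$.
\item \emph{Big vertices.} A big vertex $w$ of degree $d$ pays $1/2$ only for a link edge joining a small and a big vertex. Hence it pays at most $d/2$, and at most $(d-1)/2$ when $d$ is odd, because an odd cycle cannot alternate between the two types. So $w$ keeps at least $d/2-6\ge 0$ if $d\ge 12$, and at least $11-6-5=0$ if $d=11$.
\item \emph{Conclusion.} Medium vertices are untouched, so every final charge is nonnegative, contradicting the total $-12$.
\end{itemize}
The parity step is exactly what makes the threshold $11$, rather than $12$, work.
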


\begin{lemma}[\cite{FA}]\label{l3'}
For every outerplanar graph $G$ there exists $xy \in E(G)$ with $d_G(x) = 1$, or $d_G(x) = 2$ and $d_G(y) \le 4$.
\end{lemma}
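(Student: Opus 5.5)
The plan is the usual minimal-structure argument for outerplanar graphs: reduce to a $2$-connected leaf block and then use the tree structure of its weak dual. We may assume that $G$ has an edge and, by restricting to a component, that $G$ is connected. If some vertex $x$ has $d_G(x)=1$, then $x$ and its neighbor give the required edge. So assume $\delta(G)\ge 2$, and take a leaf block $B$ of the block--cutvertex tree of $G$ (or $B=G$ if $G$ is $2$-connected). Then $B$ is $2$-connected, since a $K_2$ leaf block would contain a vertex of degree $1$. Also, $B$ contains at most one cut vertex $c$ of $G$, and every vertex of $B$ other than $c$ has $d_G=d_B$. It therefore suffices to find an edge $xy$ of $B$ with $x,y\neq c$, $d_B(x)=2$ and $d_B(y)\le 4$.

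Since $B$ is $2$-connected and outerplanar, its outer face is a Hamiltonian cycle $C$, every other edge is a chord, and the weak dual $T$ (bounded faces, adjacent when they share a chord) is a tree.

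\textbf{Case $T$ is a single vertex.} Then $B=C$ and every vertex has degree $2$. Since $|C|\ge 3$, there are two adjacent vertices of $C$ different from $c$, and they form the required edge.

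\textbf{Case $T$ has at least two vertices.} Take a longest path $F_0F_1\cdots F_s$ in $T$, so that $F_0$ is a leaf face whose unique neighbor in $T$ is $F_1$. By maximality of the path, every neighbor of $F_1$ in $T$ other than $F_2$ is a leaf.

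\begin{itemize}
\item The face $F_0$ is bounded by a chord $uv$ together with a path of $C$ whose internal vertices all have degree $2$ in $B$.
\item If this path has at least two internal vertices, two of them are adjacent, and we are essentially done (up to avoiding $c$).
\item Otherwise $F_0$ is a triangle $uxv$ with $d_B(x)=2$, and we bound $d_B(u)$ or $d_B(v)$. The faces around $u$ appear consecutively in $T$. The edges of $F_1$ at $u$ are $uv$ and one further edge $uw$; the edges of $F_1$ at $v$ are $vu$ and one further edge $vw'$, with $uw\ne vw'$.
\item At most one of $uw$, $vw'$ is the chord shared with $F_2$. Choose the endpoint, say $u$, whose second $F_1$-edge is either an outer edge or a chord to a leaf face $F''$.
\item Then the faces at $u$ are $F_0$, $F_1$ and possibly $F''$. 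Since $F''$ is a leaf, it contributes only one further edge at $u$. Hence $d_B(u)\le 4$: the edges at $u$ are $ux$, $uv$, $uw$ and at most one more. So $xu$ works.
\end{itemize}

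The main obstacle I expect is handling the cut vertex $c$, since the chosen $x$ or $y$ might coincide with $c$ and then have larger degree in $G$. To get around this, I would use both ends of the longest path in $T$, or the fact that $T$ has at least two leaves. The two end configurations $(F_0,F_1)$ and $(F_s,F_{s-1})$ supply candidate vertices $x$ (degree-$2$ vertices interior to distinct leaf faces, hence distinct) and candidate $y$'s. Since $c$ is a single vertex, at least one configuration should avoid $c$ in both roles. I would check the degenerate overlaps, such as $s=1$ where $F_1$ is itself a leaf, separately by a direct count. When $B=G$, there is no $c$ and this issue disappears.
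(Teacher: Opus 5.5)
The paper gives no proof of this lemma; it simply cites Fabrici \cite{FA}, whose paper studies light subgraphs in outerplanar families more generally. Your proposal is a correct, self-contained proof (for $G$ with at least one edge) by an elementary route. You reduce to a $2$-connected leaf block $B$ with at most one attachment vertex $c$, then find a $2$-vertex next to a vertex of $B$-degree at most $4$ at the end of a longest path in the weak dual.

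Your degree count at $u$ is right. The faces at $u$ form a path in $T$ that starts at $F_0$, since $ux$ is an outer edge. A leaf face $F''$ cannot be followed by another face at $u$, so $d_B(u)\le 4$. The citation buys brevity. Your argument shows directly where the bound $4$ comes from: a central triangle with an ear on each side shows it cannot be lowered.

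The one step you leave as ``should'' needs to be written out. It does go through.
\begin{itemize}
\item First, $c$ may have $d_B(c)=2$. So $c$ can be the vertex $x$, or one of the two adjacent $2$-vertices in the long-path subcase. This is why you need the two-ends argument there too, not only for the role of $y$.
\item For $s\ge 2$ the two end configurations are vertex-disjoint. A vertex of $B$-degree $2$ lies on exactly one bounded face. Hence the $2$-vertices used at the two ends are distinct, and they differ from $u$ and $u'$, which have degree at least $3$.
\item Also, $u$ lies only on $F_0$, $F_1$ and possibly $F''$, while $u'$ is an endpoint of the chord $F_{s-1}F_s$. So $u=u'$ would force $F_s\in\{F_1,F''\}$. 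That means $s=2$ and $uw$ is the chord $F_1F_2$, which your choice of $u$ excludes. Hence $c$ misses at least one configuration.
\item For $s=1$, every vertex of $B$ has $d_B\le 3$. So any edge at a $2$-vertex that avoids $c$ works. Such an edge exists: if $c$ is a neighbor of a $2$-vertex, use its other neighbor; if $c$ is itself the $2$-vertex of one face, use the $2$-vertex of the other face.
\end{itemize}
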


\begin{theorem}
For every planar graph $G$, $\tau_2'(G) \le \max\{41,\,3\Delta(G)+5\}$.
\end{theorem}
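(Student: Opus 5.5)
Let $k=\max\{41,3\Delta(G)+5\}$. We argue by minimal counterexample: let $G$ be a planar graph with $\tau'_2(G)>k$ and with the fewest vertices. By Lemma~\ref{l2}, $G$ has a vertex $u$ with $d_G(u)\le 5$ and at most two neighbors of degree at least $11$. Set $G'=G-u$. Then $G'$ is planar, $\Delta(G')\le\Delta(G)$, and so $\max\{41,3\Delta(G')+5\}\le k$. By minimality, $G'$ has a $2$-tone edge $k$-coloring $f$, and by Remark~\ref{r1} this $f$ is a partial $2$-tone edge coloring of $G$. The uncolored edges are $uv_1,\dots,uv_d$ with $d\le 5$. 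Order the neighbors so that $v_1,v_2$ are the (at most two) neighbors of high degree, and color the edges $uv_1,uv_2$ first, then the remaining ones.

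For each uncolored edge $e=uv_i$ we count free colors and intermediate vertices. The adjacent edges of $e$ are the other edges at $u$ (at most $4$, some still uncolored) and the at most $d_G(v_i)-1$ edges at $v_i$. Hence, with all other edges at $u$ already colored, $f_r(e)\ge k-8-2(d_G(v_i)-1)$. The intermediate vertices of $e$ are the neighbors of $u$ and of $v_i$ other than $u,v_i$, so there are at most $4+(d_G(v_i)-1)$ of them.

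For a high-degree neighbor, $d_G(v_i)\le\Delta$, so $f_r(e)\ge 3\Delta+5-8-2\Delta+2=\Delta-1$. By Lemma~\ref{p1} a pairwise intersecting family of size about $\Delta-2$ exists, but there can be $\Delta+3$ intermediate vertices, so Corollary~\ref{x} alone does not suffice. The fix is to count free colors more carefully: color $uv_1$ and $uv_2$ \emph{first}, while the low-degree edges at $u$ are still uncolored. Then at most $1$ edge at $u$ is colored, so $f_r(uv_1)\ge k-2(\Delta-1)\ge \Delta+7$ and $|L|\ge\Delta+6$. The intermediate vertices of $uv_1$ number at most $(\Delta-1)+4=\Delta+3<\Delta+6$, so Corollary~\ref{x} (in the form of Lemma~\ref{l1}) extends $f$ to $uv_1$. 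The same argument works for $uv_2$ after $uv_1$ is colored, since it loses only two more free colors.

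For a low-degree neighbor, $d_G(v_i)\le 10$, so $f_r(e)\ge 41-8-18=15$, giving $|L|\ge 14$ by Lemma~\ref{p1}. The intermediate vertices number at most $4+9=13<14$, so Corollary~\ref{x} extends $f$ to $e$. This is exactly where the constant $41$ comes from: it is the smallest $k$ with $k-8-18-1>13$.

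After all $d$ edges at $u$ are colored we obtain a $2$-tone edge $k$-coloring of $G$, a contradiction. The main obstacle is the high-degree case. The bound $3\Delta+5$ is tight only if we exploit the ordering of the edges at $u$, and possibly the fact that a label forbidden via $v_i$'s side cannot also be forbidden elsewhere. I would verify carefully that coloring the two heavy edges first leaves $f_r\ge \Delta+5$ for the second one, so that $|L|\ge\Delta+4>\Delta+3$, and then check that the light edges still succeed once all other edges at $u$ are colored, which is the computation giving $41$.
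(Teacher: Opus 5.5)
Your proof is correct and follows the paper's argument. You delete the vertex $u$ given by Lemma~\ref{l2} and color the at most two heavy edges $uv_1,uv_2$ first, so that $f_r(uv_i)\ge k-2\Delta\ge\Delta+5$ beats $\eta_G(uv_i)\le\Delta+3$ via Corollary~\ref{x}. You then color the light edges, where $f_r\ge k-26\ge 15$ against $\eta_G\le 13$. One minor slip: the phrase ``at most $1$ edge at $u$ is colored, so $f_r(uv_1)\ge k-2(\Delta-1)$'' conflates two cases. For $uv_1$ no edge at $u$ is colored; for $uv_2$ one is, which gives $k-2\Delta$. You correct this yourself at the end.
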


\begin{proof}
Assume, for a contradiction, that the statement is false, and let $G$ be a counterexample with minimum order. Set $\Delta=\Delta(G)$ and $k=\max\{41,\,3\Delta+5\}$. 

\noindent By Lemma \ref{l2}, there exists $u\in V(G)$ with $d_G(u)\le 5$ such that at most two of its neighbors have degree at least $11$. Let $G'=G-u$. By the minimality of $G$, $G'$ admits a $2$-tone edge $k$-coloring $f$. By Remark \ref{r1}, $f$ is a partial $2$-tone edge $k$-coloring of $G$. Let $u_1,\dots,u_{d_G(u)}$ be the neighbors of $u$ in $G$, and assume, without loss of generality, that $u_1$ and $u_2$ are the neighbors of degree at least $11$, if such neighbors exist. We extend $f$ to the edges $uu_1,\dots,uu_{d_G(u)}$ in this order. For each $i$, let $e_i=uu_i$.

\noindent For $i\in\{1,2\}$, we have $d_G(u_i)\le \Delta$ and at most $i-1\le 1$ edges at $u$ have been colored; hence, $f_r(e_i)\ge k-2((\Delta-1)+(i-1))\ge k-2\Delta$. Let $L$ be a largest family of pairwise intersecting candidate labels for $e_i$. Since $f_r(e_i) > 3$, Lemma \ref{p1} implies that $|L| \ge k-2\Delta-1$. We have $\eta_G(e_i) \le \Delta +3$; hence, $|L|>\eta_G(e_i)$. Thus, by Corollary \ref{x}, the coloring extends to $e_i$.

\noindent Now for $i\ge 3$, we have $d_G(u_i)\le 10$, so at most $9$ edges at $u_i$ are colored, and at most $i-1\le 4$ edges at $u$ are colored. Thus $f_r(e_i)\ge k-2(9+4)=k-26$. Let $L'$ be a largest family of pairwise intersecting candidate labels for $e_i$. Since $f_r(e_i)> 3$, Lemma \ref{p1} implies that $|L'| \ge k-27$. We have $\eta_G(e_i)\le 13$; hence, $|L'|>\eta_G(e_i)$. Therefore, by Corollary \ref{x}, the coloring extends to $e_i$. This yields a $2$-tone edge $k$-coloring of $G$, a contradiction.\end{proof}

While the previous result applies to all planar graphs, a sharper bound can be derived for outerplanar graphs.

\begin{theorem}\label{t4}
If $G$ is an outerplanar graph, then $\tau_2'(G)\le \max\{14,\,3\Delta(G)\}$.
\end{theorem}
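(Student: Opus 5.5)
The plan is to argue by a minimal counterexample, in the same way as for planar graphs, but to use the structural Lemma~\ref{l3'} in place of Lemma~\ref{l2}. Let $G$ be a counterexample of minimum order, and set $\Delta=\Delta(G)$ and $k=\max\{14,3\Delta\}$.

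\textbf{Reduction.} By Lemma~\ref{l3'} there is an edge $xy$ with either $d_G(x)=1$, or $d_G(x)=2$ and $d_G(y)\le 4$. Let $G'=G-x$. Then $G'$ is outerplanar and $\Delta(G')\le\Delta$, so $\max\{14,3\Delta(G')\}\le k$. By minimality, $G'$ has a $2$-tone edge $k$-coloring $f$, and by Remark~\ref{r1} this $f$ is a partial $2$-tone edge $k$-coloring of $G$. It remains to extend $f$ to the at most two edges at $x$. Each extension will use Lemma~\ref{p1} together with Corollary~\ref{x}, exactly as in the proof of Theorem~\ref{t1}.

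\textbf{Case $d_G(x)=1$.} The only uncolored edge is $xy$. It has at most $\Delta-1$ colored neighbors, all at $y$, so
\[
f_r(xy)\ge k-2(\Delta-1)=k-2\Delta+2 .
\]
This is at least $8$ when $k=14$ (since then $\Delta\le 4$), and at least $\Delta+2\ge 7$ when $k=3\Delta$. Hence $f_r(xy)>3$, and Lemma~\ref{p1} gives a pairwise intersecting family $L$ of candidate labels with $|L|\ge k-2\Delta+1\ge \Delta+1$. On the other hand $\eta_G(xy)\le \Delta-1$, so $|L|>\eta_G(xy)$ and Corollary~\ref{x} extends $f$ to $xy$.

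\textbf{Case $d_G(x)=2$.} Let the neighbors of $x$ be $y$ (with $d_G(y)\le 4$) and $z$ (with $d_G(z)\le\Delta$). We color $xz$ first and then $xy$, since the high-degree side should be handled while $x$ still contributes nothing.
\begin{itemize}
\item[(i)] \emph{Coloring $xz$.} Its colored neighbors are the at most $\Delta-1$ other edges at $z$, so $f_r(xz)\ge k-2\Delta+2>3$. Lemma~\ref{p1} then gives $|L|\ge k-2\Delta+1\ge\Delta+1$. Moreover
\[
\eta_G(xz)=|N(x)\cup N(z)|-2\le 1+(\Delta-1)=\Delta ,
\]
where the term $1$ accounts for $y$, which is an intermediate vertex carrying the edges at $y$. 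Thus $|L|>\eta_G(xz)$, and Corollary~\ref{x} colors $xz$. This is precisely where $k\ge 3\Delta$ is needed, with no slack.
\item[(ii)] \emph{Coloring $xy$.} Its colored neighbors are at most $3$ edges at $y$ together with $xz$, so $f_r(xy)\ge k-8\ge 6$. Hence $|L'|\ge k-9\ge 5$. Also $\eta_G(xy)\le 3+1=4$. Therefore $|L'|>\eta_G(xy)$, and Corollary~\ref{x} colors $xy$. This step is where $k\ge 14$ is needed.
\end{itemize}

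In both cases $G$ receives a $2$-tone edge $k$-coloring, which is a contradiction.

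\textbf{Main obstacle.} The delicate step is the tight count in (i). There one must check that every second neighbor of $xz$ is incident with a vertex of $N(x)\cup N(z)$ other than $x,z$; in particular the edges at $y$ are accounted for through the intermediate vertex $y$. One must also check that the ordering ($xz$ before $xy$) keeps both inequalities strict. If the bound in (i) turned out to be off by one, I would instead order the coloring so that $xy$ is colored last. I would also use Remark~\ref{r2} directly on the at most $\Delta$ intermediate vertices of $xz$.
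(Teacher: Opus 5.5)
Your proof is correct and matches the paper's argument step for step. The paper also takes a minimal counterexample, deletes the low-degree vertex given by Lemma~\ref{l3'}, and extends via Lemma~\ref{p1} and Corollary~\ref{x}, coloring the edge to the unrestricted neighbor first (where $k\ge 3\Delta$ is used) and the edge to the degree-$\le 4$ neighbor last (where $k\ge 14$ is used), with the same counts. Your check that $\max\{14,3\Delta(G')\}\le k$ makes explicit a point the paper leaves implicit, and the fallback in your ``main obstacle'' paragraph is unnecessary, since you already color $xy$ last.
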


\begin{proof}
Assume, for a contradiction, that the statement is false, and let $G$ be a counterexample with minimum order. Set $\Delta=\Delta(G)$ and $k=\max\{14,\,3\Delta\}$. By Lemma \ref{l3'}, there exists an edge $uv \in E(G)$ such that $d_G(u)=1$, or $d_G(u)=2$ and $d_G(v)\le 4$. Let $G'=G-u$. By the minimality of $G$, the graph $G'$ admits a $2$-tone edge $k$-coloring $f$, which is, by Remark \ref{r1}, a partial $2$-tone edge coloring of $G$.

\noindent If $d_G(u)=1$, then at most $\Delta-1$ edges adjacent to $uv$ are colored. Hence, $f_r(uv)\ge k-2(\Delta-1)$. Let $L$ be a largest family of pairwise intersecting candidate labels for $uv$. Since $f_r(uv) > 3$, Lemma \ref{p1} implies that $|L| \ge k-2(\Delta-1)-1\ge \Delta +1$. Since $\eta_G(uv)\le \Delta-1$, it follows that $|L|>\eta_G(uv)$, and the coloring extends to $uv$ by Corollary \ref{x}, a contradiction.

\noindent Hence, we may assume that $d_G(u)=2$. Let $w$ be its neighbor other than $v$. We first extend $f$ to the edge $uw$. At this stage, no edge incident to $u$ is colored, and at most $\Delta-1$ edges incident to $w$ are colored. Hence, $f_r(uw)\ge k-2(\Delta-1)$. Let $L'$ be a largest family of pairwise intersecting candidate labels for $uw$. Since $f_r(uw) > 3$, Lemma \ref{p1} implies that $|L'| \ge k-2(\Delta-1)-1\ge \Delta+1$. Since $\eta_G(uw)\le \Delta<|L'|$, the coloring extends to $uw$ by Corollary \ref{x}.

\noindent Now consider $uv$. At most $3$ edges incident to $v$ are already colored, and one edge incident to $u$ is colored. Thus, $f_r(uv)\ge k-8$. Let $L''$ be a largest family of pairwise intersecting candidate labels for $uv$. Since $f_r(uv) > 3$, Lemma \ref{p1} implies that $|L''| \ge k-9$. Since $\eta_G(uv)\le 4$, we have $|L''|>\eta_G(uv)$, and the coloring extends to $uv$ by Corollary \ref{x}, a contradiction.\end{proof}

We now turn to subcubic graphs, that is, graphs with maximum degree at most three. Graphs with maximum degree exactly three are called cubic. The general lower bound on $\tau'_2$ yields $\tau'_2(G)\ge6$ for every subcubic graph $G$. The Petersen graph shows that this bound is sharp by admitting a $2$-tone edge $6$-coloring (see Figure~\ref{f1}). Furthermore, a direct verification shows that $\tau'_2(K_4-e)=8$ and $\tau'_2(K_4)=9$.

\begin{figure}[htbp]
\centering
\begin{tikzpicture}[
scale=1.6,
every node/.style={circle, draw, inner sep=1.5pt},
lab/.style={draw=none, fill=white, inner sep=0.8pt, font=\scriptsize}
]

\node (v1) at (90:2) {};
\node (v2) at (18:2) {};
\node (v3) at (-54:2) {};
\node (v4) at (-126:2) {};
\node (v5) at (162:2) {};

\node (u1) at (90:1.1) {};
\node (u2) at (18:1.1) {};
\node (u3) at (-54:1.1) {};
\node (u4) at (-126:1.1) {};
\node (u5) at (162:1.1) {};

\draw[very thick] (v1)--(v2) node[midway, lab] {23};
\draw[very thick] (v2)--(v3) node[midway, lab] {45};
\draw[very thick] (v3)--(v4) node[midway, lab] {13};
\draw[very thick] (v4)--(v5) node[midway, lab] {25};
\draw[very thick] (v5)--(v1) node[midway, lab] {14};

\draw[very thick] (u1)--(u3) node[midway, lab] {34};
\draw[very thick] (u3)--(u5) node[midway, lab] {15};
\draw[very thick] (u5)--(u2) node[midway, lab] {24};
\draw[very thick] (u2)--(u4) node[midway, lab] {35};
\draw[very thick] (u4)--(u1) node[midway, lab] {12};

\draw[very thick] (v1)--(u1) node[midway, lab] {56};
\draw[very thick] (v2)--(u2) node[midway, lab] {16};
\draw[very thick] (v3)--(u3) node[midway, lab] {26};
\draw[very thick] (v4)--(u4) node[midway, lab] {46};
\draw[very thick] (v5)--(u5) node[midway, lab] {36};

\end{tikzpicture}
\caption{A $2$-tone edge $6$-coloring of the Petersen graph.}
\label{f1}
\end{figure}

By a result of Dong \cite{Dong1}, every graph $H$ with $\Delta(H)\le4$ satisfies $\tau_2(H)\le12$. Since $\Delta(L(G))\le4$ for every subcubic graph $G$, it follows that $\tau'_2(G)\le12$, improving the bound of Corollary~\ref{cc2} from $13$ to $12$. We strengthen this result by further reducing the bound to $11$ for claw-free subcubic graphs and to $10$ for $2$-degenerate subcubic graphs, where a graph is said to be \emph{claw-free} if it contains no induced subgraph isomorphic to $K_{1,3}$ (the claw), and \emph{$2$-degenerate} if every subgraph contains a vertex of degree at most two.

To establish these bounds, we study the extension of partial colorings after vertex deletion. This leads to several local configurations, which are handled in the following lemmas. Before stating the lemmas, we introduce the following definition.

\begin{definition}
Let $e=xy$ be an edge of a graph $G$. The edges incident with $x$, distinct from $e$, are called the \emph{$yx$-edges}. Similarly, the edges incident with $y$, distinct from $e$, are called the \emph{$xy$-edges}.
\end{definition}

\begin{lemma}\label{l3}
Let $f$ be a partial $2$-tone edge $7$-coloring of a subcubic graph $G$, and let $e$ be an uncolored edge of $G$. If one end of $e$ has degree one in $G$, then $f$ can be extended to $e$.
\end{lemma}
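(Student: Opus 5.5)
Let $e=xy$ with $d_G(x)=1$. Since $G$ is subcubic, $y$ has at most two other neighbors $y_1,y_2$, so $e$ has at most two neighbors, namely $yy_1$ and $yy_2$. These are the $xy$-edges, and there are no $yx$-edges. I would extend $f$ to $e$ directly by counting: bound the free colors below, then bound the number of candidate labels destroyed by second neighbors above.

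\textbf{Free colors.} The at most two colored neighbors of $e$ carry at most $4$ colors, so $f_r(e)\ge 7-4=3$.

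\textbf{Second neighbors.} These are edges incident with $y_1$ or $y_2$ other than $yy_1$, $yy_2$ (and possibly the edge $y_1y_2$ itself if it exists). Their number is at most $4$, and the intermediate vertices of $e$ are contained in $\{y_1,y_2\}$ together with their other neighbors. Since intermediate vertices are by definition adjacent to an end of $e$, only $y_1,y_2$ qualify, so there are at most $2$ intermediate vertices.

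\textbf{Conclusion.} By Lemma~\ref{p1} with $m=f_r(e)\ge 3$, there is a pairwise intersecting family $L$ of candidate labels with $|L|\ge 3$: all three pairs when $m=3$, and $m-1\ge 3$ pairs when $m\ge4$. Since $|L|\ge 3>2\ge$ the number of intermediate vertices, Lemma~\ref{l1} gives at least one valid label, so $f$ extends to $e$.

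The only delicate point is the count of intermediate vertices. The definition requires $u$ to be adjacent to $x$ or $y$ and incident with a second neighbor of $e$. Since $x$ has no neighbor besides $y$, and $x$ is an end of $e$ (hence not incident with a second neighbor), the only possibilities are $y_1$ and $y_2$. I would check the case where the edge $y_1y_2$ is present: it is then a second neighbor incident with both $y_1$ and $y_2$. This does not break Remark~\ref{r2}, because each of $y_1,y_2$ is still incident with at most one forbidding edge under $L$, so Lemma~\ref{l1} still counts at most $2$ forbidding edges.
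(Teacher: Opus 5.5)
Your proposal is correct and matches the paper's proof. In both, at least three free colors give three pairwise intersecting candidate labels, and at most two intermediate vertices let Lemma~\ref{l1} supply a valid label. The paper phrases the last step as $\eta_G(e)\le 2$ and cites Corollary~\ref{x}, which is equivalent here.
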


\begin{proof}
Let $e=uv$, and assume, without loss of generality, that $d_G(u)=1$. Since $v$ is incident with at most two edges distinct from $e$, at least three colors are free at $e$. Consequently, $e$ admits three pairwise intersecting candidate labels. Since $\eta_G(e)\le 2$, Corollary~\ref{x} ensures that $f$ extends to $e$.\end{proof}

\begin{lemma}\label{l4}
Let $f$ be a partial $2$-tone edge $10$-coloring of a subcubic graph $G$. If $G$ contains a vertex $u$ of degree two whose incident edges are both uncolored, then $f$ can be extended to these edges.
\end{lemma}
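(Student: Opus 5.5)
The plan is to colour $uv$ first and then $uw$, choosing the label of $uv$ so that enough room is left for $uw$. Let $v$ and $w$ be the two neighbours of $u$. Both $v$ and $w$ have at most two further incident edges, and these may already be coloured. Let $C_v$ (resp.\ $C_w$) be the set of colours appearing on the coloured edges at $v$ other than $uv$ (resp.\ at $w$ other than $uw$). Then $|C_v|,|C_w|\le 4$.

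\textbf{Step 1: colouring $uv$.} The intermediate vertices of $uv$ are the neighbours of $v$ other than $u$ (at most two) together with $w$, so there are at most $3$. Since only the edges at $v$ are coloured among the neighbours of $uv$, we have $f_r(uv)\ge 10-4=6$.

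\textbf{Main case: some colour $c\in C_w\setminus C_v$ exists.} Then $c$ is free at $uv$. The family $L$ of all candidate labels $\{c,x\}$, with $x$ ranging over the other free colours, is pairwise intersecting and has $|L|\ge 5>3$. By Lemma~\ref{l1}, some label $cx\in L$ is valid for $uv$; we assign it.

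Now consider $uw$. Its neighbours are the edges at $w$, which use colours from $C_w$, and the edge $uv$, labelled $cx$. Since $c\in C_w$, the colours forbidden at $uw$ number at most $|C_w|+1\le 5$, so $f_r(uw)\ge 5$. Lemma~\ref{p1} then gives a pairwise intersecting family of at least $4$ candidate labels. The intermediate vertices of $uw$ are $v$ and the at most two other neighbours of $w$, so there are at most $3$ of them. Hence $uw$ extends by Lemma~\ref{l1}.

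\textbf{Remaining case: $C_w\subseteq C_v$.} Here we colour $uv$ with any valid label, which exists by Step 1 since the full star family at a free colour has size $\ge 5>3$. For $uw$ we have $f_r(uw)\ge 10-|C_w|-2$.

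If $|C_w|\le 3$, then $f_r(uw)\ge 5$ and the argument of the main case applies verbatim.

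If $|C_w|=4$, then $C_w=C_v$ and $f_r(uw)\ge 4$. This gives at least $\binom42=6$ candidate labels, all of which avoid $C_w=C_v$. The labels on the edges at $v$ use only colours of $C_v$, so none of these edges can forbid a candidate label for $uw$. The only possible forbidding edges are therefore the edges at the two other neighbours of $w$, excluding the edges at $w$ itself. There are at most $4$ such edges, and $4<6$, so a valid label for $uw$ exists.

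\textbf{Main obstacle.} I expect the delicate point to be this last case, where the intersecting-family bound alone gives only $3$ labels against $3$ intermediate vertices and so fails. There I switch to counting all candidate labels and use the observation $C_w=C_v$ to discard the edges at $v$ as potential forbidders. I also need to double-check the counts when $v$ or $w$ has degree less than three, or when $v$ and $w$ are adjacent. In those situations all the bounds only improve: in particular, if $vw$ is an edge it is adjacent to both $uv$ and $uw$, and so it is not a second neighbour of either of them.
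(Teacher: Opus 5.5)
Your proof is correct, but you organize the case analysis differently from the paper.

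The paper's route is adaptive:
\begin{itemize}
\item It colours $uu_1$ from the star $\{12,13,14,15,16\}$ centred at an \emph{arbitrary} free colour. Corollary~\ref{x} then gives two valid labels, say $15$ and $16$.
\item It branches on this outcome. If one of them meets the colours at $u_2$, that label is assigned, and $uu_2$ is finished by a raw count: $\binom{5}{2}=10$ candidates against at most $6$ second neighbours.
\item Otherwise $15$ is assigned. The spare colour $6$ is free at $uu_1$, so it lies on no $u_1$-edge. It becomes the centre of the star $\{a6,b6,c6\}$ for $uu_2$, and only the at most two intermediate vertices other than $u_1$ matter.
\end{itemize}

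You instead split in advance according to whether $C_w\setminus C_v$ is empty. When it is not, you pick the star centre there, which forces the overlap from the start. When it is empty, the only tight subcase is $C_w=C_v$. There every candidate for $uw$ automatically avoids all colours on the edges at $v$, so you can discard those edges and count.

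So the two proofs neutralise the edges at the first-coloured neighbour by different means. The paper uses one well-chosen colour, which is why it needs two valid labels with a common centre. You use the equality of colour sets.

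Your split is arguably more transparent, and the paper's version avoids reasoning about $C_v$ and $C_w$ globally. Your device of picking the star centre among colours already present at the far end is exactly what the paper itself does later, in Lemma~\ref{l5} and in Theorems~\ref{t5} and~\ref{t7}.
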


\begin{proof}
Let $uu_1$ and $uu_2$ be the two uncolored edges incident with $u$. We first color the edge $uu_1$. Since $uu_2$ is uncolored and $u_1$ is incident with at most two colored edges, at least six colors are free at $uu_1$. Denote these colors by $1,2,3,4,5,6$ and consider $L=\{12,13,14,15,16\}$. By Corollary \ref{x}, since $\eta_G(uu_1)\le 3$, at least two labels in $L$ are valid for $uu_1$; assume these are $15$ and $16$.

\noindent If one of these labels shares a color with $uu_2$-edges, then at least five colors are free at $uu_2$, yielding at least ten candidate labels. Since $uu_2$ has at most six second neighbors, at least one of these labels is valid for $uu_2$.

\noindent Otherwise, neither $15$ nor $16$ intersects the colors on $uu_2$-edges. Assign $15$ to $uu_1$, so $6$ is free at $uu_2$. At most six colors appear on edges adjacent to $uu_2$, hence at least four colors are free at $uu_2$; denote them by $a,b,c,6$. Consider $L'=\{a6,b6,c6\}$. Since $6$ is free at $uu_1$, no $uu_1$-edge forbids labels from $L'$ for $uu_2$. Moreover, $uu_2$ has at most two intermediate vertices distinct from $u_1$, and thus, by Lemma \ref{l1}, $uu_2$ has at most two forbidding edges under $L'$. Hence, some label in $L'$ is valid for $uu_2$, and $f$ extends.\end{proof}

\begin{lemma}\label{l5}
Let $f$ be a partial $2$-tone edge $9$-coloring of a cubic graph $G$. Let $u\in V(G)$ with neighbors $u_1, u_2, u_3$ such that $u_2$ is adjacent to both $u_1$ and $u_3$. If the edges $uu_1, uu_2,$ and $uu_3$ are uncolored, then $f$ can be extended to these edges.
\end{lemma}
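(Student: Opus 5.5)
The plan is to color the three uncolored edges in the order $uu_2$, $uu_1$, $uu_3$, choosing each label so that the later edges keep enough room. Since $G$ is cubic, the neighbors of $u_2$ are exactly $u,u_1,u_3$, so $u_2u_1$ and $u_2u_3$ are colored. Let $x$ be the third neighbor of $u_1$ and $y$ the third neighbor of $u_3$. If $u_1u_3\in E(G)$, then $G\supseteq K_4$ and $G=K_4$, which is excluded since $\tau'_2(K_4)=9$ is realizable directly; alternatively that case is handled separately by exhibiting a coloring. So assume $x\neq u_3$ and $y\neq u_1$; possibly $x=y$.

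\textbf{Step 1 (color $uu_2$ first, with a useful overlap).} At this stage the only colored neighbors of $uu_2$ are $u_2u_1$ and $u_2u_3$, so $f_r(uu_2)\ge 5$. Its only colored second neighbors are $u_1x$ and $u_3y$. The edge $uu_1$ is the last-but-one edge and $uu_3$ the last, and each is adjacent to $uu_2$. Their free colors therefore grow whenever the label of $uu_2$ overlaps colors already forbidden at them. The colors at $uu_1$ already forbidden by other edges are $f(u_1u_2)\cup f(u_1x)$, and $f(u_1u_2)$ is excluded from $uu_2$. So I would choose $f(uu_2)=ab$ with $a\in f(u_1x)$ and $b\in f(u_3y)$, both avoiding $f(u_2u_1)\cup f(u_2u_3)$, with $ab\notin\{f(u_1x),f(u_3y)\}$. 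A short case check on how $f(u_1x)$ and $f(u_3y)$ meet $f(u_2u_1)\cup f(u_2u_3)$ (and on whether $x=y$) shows such a label exists, or at least one with a single overlap. With two overlaps, $uu_1$ and $uu_3$ each see at most $5$ used colors on their colored neighbors before the other is colored.

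\textbf{Step 2 (color $uu_1$ and $uu_3$ jointly).} After Step 1, $uu_1$ has at least $4$ free colors, hence at least $6$ candidate labels. Its colored second neighbors are $u_2u_3$, $u_3y$ and the two other edges at $x$, at most $4$ in total, so several labels remain valid. The difficulty is the last edge $uu_3$. Its neighbors $uu_1,uu_2,u_3u_2,u_3y$ may use up to $7$ colors once $uu_1$ is colored, and its second neighbors $u_1u_2$, $u_1x$ and the two edges at $y$ are all colored. I would therefore not color $uu_1$ greedily. Instead I would pick its label to maximize overlap with $f(u_3u_2)\cup f(u_3y)$, ideally so that $f(uu_1)$ shares one color with $f(u_3y)$ (distance two is allowed for $uu_1$ versus $u_3y$ as long as the labels differ). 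This leaves at least $3$ free colors at $uu_3$, hence three pairwise intersecting candidate labels. I would then argue with Remark \ref{r2}/Lemma \ref{l1} that among the valid choices for $uu_1$ (there are at least two), one leaves fewer than three forbidding edges at $uu_3$. The intermediate vertices of $uu_3$ are $u_1$ and $y$, plus $u_2$ whose edge $u_2u_1$ is itself adjacent, so the count is at most $3$.

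\textbf{Main obstacle.} The hard part is the final step, when the count is tight: exactly $3$ free colors at $uu_3$ and $3$ potential forbidding edges. There I would exploit the freedom left in Steps 1 and 2. If every valid choice fails, I would swap the roles of $u_1$ and $u_3$ (the configuration is symmetric), or recolor $uu_2$ with its second available label. I would derive a contradiction by showing that the forbidding edges at $u_1$, $u_2$ and $y$ would force $f(u_1x)$ or $f(u_3y)$ to coincide with a label it is at distance $\le 2$ from.
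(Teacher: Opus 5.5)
Your proposal has a genuine gap: it never completes the extension to $uu_3$. You identify this step as the main obstacle yourself, but you leave it as a list of fallback ideas (swap $u_1$ and $u_3$, recolor $uu_2$, ``derive a contradiction''), none of which is carried out. Step~2 has the same problem. You never show that a \emph{valid} label for $uu_1$ with the needed overlap exists, and the overlap you aim for (``ideally with $f(u_3y)$'') can be impossible. For example, take $f(u_1u_2)=12$, $f(u_2u_3)=34$, $f(u_1x)=56$, $f(u_3y)=17$. Your Step~1 gives $f(uu_2)\in\{57,67\}$, and then $1$ and $7$ are both blocked at $uu_1$. The claim that among the valid choices for $uu_1$ ``one leaves fewer than three forbidding edges at $uu_3$'' is not supported by any argument.

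The perceived obstacle comes mostly from an overcount. The second neighbors of $uu_3$ are $u_1u_2$, $u_1x$, and the other edges at $y$. Since $u_1u_2$ and $u_1x$ share $u_1$, Remark~\ref{r2} lets at most one of them forbid a label from a pairwise intersecting family. So $uu_3$ has at most two forbidding edges (one at $u_1$, one at $y$), not three, and three free colors at $uu_3$ already suffice.

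What is missing for $uu_1$ is the right overlap. Take a color $\beta\in f(u_2u_3)$ that is free at $uu_1$. It exists because $f(u_1x)\neq f(u_2u_3)$ (distance two) and $f(uu_2)\cap f(u_2u_3)=\emptyset$. Use the family $\{\beta z\}$: it has at least three members once Step~1 leaves four free colors at $uu_1$. It has at most two forbidding edges: one at $u_3$, covering both $u_2u_3$ and $u_3y$, and one at $x$. Afterwards $uu_3$ sees at most $|f(u_2u_3)\cup f(u_3y)\cup f(uu_2)|+1\le 6$ colors, and you are done.

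Step~1's case check must also actually be done. If $f(u_1x)\cap f(u_3y)\neq\emptyset$ (including $f(u_1x)=f(u_3y)$), use a common color; otherwise take one color from each. This is needed because a one-sided overlap breaks one of the two counts.

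With these fixes your order works. The paper instead colors $uu_1$ with a color of $f(u_2u_3)$, then $uu_3$ with a color of $f(u_1u_2)$, and $uu_2$ last. Since $uu_2$ has only the two second neighbors $u_1x$ and $u_3y$, this avoids any Step~1 analysis.

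Finally, $K_4$ is not excluded by the hypotheses, and exhibiting some coloring of $K_4$ does not show that the given $f$ extends. That case is easy to check directly, though, since the triangle labels are pairwise disjoint.
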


\begin{proof}
Let $u_1'$ and $u_3'$ be the neighbors of $u_1$ and $u_3$ outside the diamond, respectively. Suppose, without loss of generality, that $f(u_1u_2)=12$ and $f(u_2u_3)=34$.

\noindent We first color the edge $uu_1$. Since the distance between $u_2u_3$ and $u_1u_1'$ is at most two, $f(u_1u_1')\ne 34$. Thus, at least one of the colors $3$ or $4$ is free at $uu_1$. Without loss of generality, we may assume that $3$ is free at $uu_1$. Moreover, at least five colors are free at $uu_1$. Pairing $3$ with the other free colors at $uu_1$ yields a pairwise intersecting family $L$ of four candidate labels for $uu_1$. Since $uu_1$ has at most two intermediate vertices, it follows from Lemma \ref{l1} that at least one label in $L$ is valid for $uu_1$.

\noindent Next, we color the edge $uu_3$. The distance between $u_1u_2$ and $u_3u_3'$ is at most two, so $f(u_3u_3')\ne 12$. Hence, one of the colors $1$ or $2$ is free at $uu_3$; assume, without loss of generality, that $1$ is free at $uu_3$. Furthermore, at least four colors are free at $uu_3$, including $1$. Pairing $1$ with the other free colors at $uu_3$ yields a pairwise intersecting family $L'$ of three candidate labels for $uu_3$. Since $uu_3$ has at most two intermediate vertices, Lemma \ref{l1} implies that at least one label in $L'$ is valid for $uu_3$.

\noindent Finally, we consider the edge $uu_2$. At this stage, at most six colors appear on edges adjacent to $uu_2$, and hence $f_r(uu_2)\ge 3$. These three colors form three pairwise intersecting candidate labels for $uu_2$. Since $uu_2$ has at most two intermediate vertices, Lemma \ref{l1} ensures that at least one of these labels is valid for $uu_2$.\end{proof}

We are now ready to prove the main results.

\begin{theorem}\label{t5}
Every claw-free subcubic graph admits a $2$-tone edge $11$-coloring.
\end{theorem}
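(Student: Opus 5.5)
The plan is a minimal counterexample argument. Let $G$ be a claw-free subcubic graph with no $2$-tone edge $11$-coloring, chosen with the minimum number of vertices; $G$ is connected. Every vertex deletion below leaves a claw-free subcubic graph, which by minimality has a $2$-tone edge $11$-coloring $f$. By Remark~\ref{r1}, $f$ is a partial coloring of $G$, and the goal each time is to extend it to the uncolored edges at the deleted vertex.

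\emph{Low-degree vertices.} If $G$ has a vertex $u$ of degree one, delete $u$ and extend by Lemma~\ref{l3}; this needs only $7\le 11$ colors. If $G$ has a vertex $u$ of degree two, delete $u$ and extend to both edges at $u$ by Lemma~\ref{l4}; this needs only $10\le 11$ colors. Hence $G$ is cubic.

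\emph{Diamonds and $K_4$.} In a claw-free cubic graph, every vertex lies in a triangle. If $G=K_4$, then $\tau'_2(K_4)=9\le 11$. If $G$ contains a diamond $K_4-e$, let $u$ and $u_2$ be its two degree-$3$ vertices, so $u$ has neighbors $u_1,u_2,u_3$ and $u_2$ is adjacent to $u_1$ and $u_3$. Delete $u$ and apply Lemma~\ref{l5}, which works with $9$ colors and so a fortiori with $11$. What remains is the case where the triangles are pairwise vertex-disjoint and each vertex lies in exactly one triangle.

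\emph{Disjoint triangles.} Take a triangle $uvw$. Let $x,v',w'$ be the third neighbors of $u,v,w$, and let $x_1,x_2$ be the other neighbors of $x$. Delete $u$ and color $ux$, $uv$, $uw$ in this order.
\begin{itemize}[nosep]
\item For $ux$, at most $4$ colors are blocked, so $f_r(ux)\ge 7$. Lemma~\ref{p1} gives an intersecting family of size $6$. There are at most $4$ intermediate vertices ($v,w,x_1,x_2$), so Lemma~\ref{l1} leaves at least $2$ valid choices.
\item For $uv$, at most $6$ colors are blocked, so $f_r(uv)\ge5$. This gives a family of size $4$ against at most $3$ intermediate vertices ($w,v',x$).
\end{itemize}

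\emph{Main obstacle: the last edge $uw$.} Here up to $8$ colors may be blocked, namely those on $ux,uv,vw,ww'$. That gives only $3$ free colors against $3$ intermediate vertices $v,x,w'$, so the counting is tight.

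I will use the slack in the earlier choices to make some label at $uw$ share a color with $f(ww')$. This is allowed, because $ux$ and $uv$ are both at distance $2$ from $ww'$. If this succeeds, at most $7$ colors are blocked at $uw$, so $f_r(uw)\ge4$. That gives $6$ candidate labels, while $uw$ has at most $5$ second neighbors ($vv',xx_1,xx_2$ and the two further edges at $w'$), so some candidate is valid.

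Concretely, write $f(ww')=ab$. I will first try to choose the label of $ux$ from the intersecting family centered at a free color in $\{a,b\}$. The $2$ spare valid labels at $ux$ leave room for this. If both $a$ and $b$ are used on the edges at $x$, I will instead try to place $a$ or $b$ in the label of $uv$; the spare choice there helps.

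If both attempts fail, the colors at $x$, $v'$ and $ww'$ are heavily constrained. In that event I will re-examine the forbidding edges directly. Two edges at one intermediate vertex are adjacent and so carry disjoint labels (Remark~\ref{r2}). Using this, I expect to show either that some label among the $3$ free colors is unforbidden, or that recoloring $uv$ with its alternative valid label frees a color at $uw$. This final case analysis is where I expect most of the work to lie.
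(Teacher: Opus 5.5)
Your strategy works and differs from the paper's, but the proof is not finished at the point you flag. The reduction to the configuration where every vertex lies in exactly one triangle is correct, and so are your counts at $ux$, at $uv$, and at $uw$ once $f_r(uw)\ge 4$.

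\textbf{The missing step.} You never check that the second attempt (putting $a$ or $b$ into $f(uv)$) succeeds, and you leave a ``both attempts fail'' case to an unspecified analysis. That case cannot occur. Suppose $f(ux)\cap\{a,b\}=\emptyset$, which is forced when $a$ and $b$ both appear at $x$. The colored edges adjacent to $uv$ are $ux$, $vw$, $vv'$.
\begin{enumerate}
\item Since $vw$ is adjacent to $ww'$, we have $f(vw)\cap\{a,b\}=\emptyset$.
\item Since $vv'$ is within distance two of $ww'$, we have $|f(vv')\cap\{a,b\}|\le 1$.
\item Hence one of $a,b$, say $a$, is free at $uv$.
\item The star $\{ac : c \text{ free at } uv,\ c\neq a\}$ contains at least $f_r(uv)-1\ge 4$ pairwise intersecting candidate labels.
\item The edge $uv$ has at most three intermediate vertices, namely $x,w,v'$. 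So Lemma~\ref{l1} gives a valid label for $uv$ containing $a$.
\item Then $a\in f(uv)\cap f(ww')$, so $f_r(uw)\ge 4$, and your $6>5$ count finishes.
\end{enumerate}

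Consequently the first attempt is unnecessary. Give $ux$ any valid label: either it already meets $\{a,b\}$, or the argument above applies at $uv$. Where you do use the first attempt, the right justification is Lemma~\ref{l1} applied to the star centered at $a$ (at least $6$ labels against $4$ intermediate vertices). The ``two spare valid labels'' of an arbitrary maximum family need not contain $a$.

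\textbf{Comparison with the paper.} With this step supplied, your argument is a complete proof, and it differs from the paper's in the final configuration. Write $u_1u_2$ for the triangle edge opposite $u$. The paper colors the triangle edges $uu_1,uu_2$ first, choosing $f(uu_1)$ to share a color with $f(u_2u_2')$ so that $uu_2$ keeps at least six free colors. It colors the non-triangle edge $uu_3$ last. Since $uu_3$ has four intermediate vertices, the paper then needs a case analysis on whether the edges at $u_3$ forbid labels from the star chosen at $uu_2$, and whether they repeat its colors.

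You color the non-triangle edge first instead. This makes the tight last edge a triangle edge with only three intermediate vertices. Its neighbor $ww'$ is at distance two from both earlier edges, so a single pigeonhole on the colors of $f(ww')$ replaces the paper's case analysis. Both proofs rest on the same lemmas, but yours is shorter.
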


\begin{proof}
Suppose, for a contradiction, that the statement is false, and let $G$ be a counterexample with the minimum number of vertices.

\noindent Our general strategy is to select a vertex $u \in G$ and consider the graph $G' = G - u$. Since the property of being claw-free and subcubic is closed under taking subgraphs, $G'$ is also a claw-free subcubic graph. By the minimality of $G$, the graph $G'$ admits a $2$-tone edge $11$-coloring. By Remark \ref{r1}, this coloring is a partial $2$-tone edge $11$-coloring of $G$ where only the edges incident with $u$ are uncolored. We then demonstrate that this partial coloring can be extended to the uncolored edges, yielding a contradiction. To this end, we first derive several structural properties of $G$.

\noindent Clearly, $G$ is connected; otherwise, by the minimality of $G$, each connected component of $G$ admits a $2$-tone edge $11$-coloring, and the union of these colorings yields a $2$-tone edge $11$-coloring of $G$, a contradiction.

\noindent We next show that $G$ has no vertex of degree one. Suppose, to the contrary, that $u$ is a vertex of degree one in $G$ with neighbor $v$, and let $G' = G - u$. By the minimality of $G$, the graph $G'$ admits a $2$-tone edge $11$-coloring $f$. By Lemma \ref{l3}, $f$ extends to the uncolored edge $uv$, yielding a $2$-tone edge $11$-coloring of $G$, a contradiction.

\noindent Similarly, $G$ has no vertex of degree two. Let $u$ be a vertex of degree two in $G$ with neighbors $v$ and $w$, and let $G' = G - u$. By the minimality of $G$, $G'$ admits a $2$-tone edge $11$-coloring $f$. By Lemma \ref{l4}, $f$ extends to both edges $uv$ and $uw$, again producing a contradiction. Therefore, $G$ is a cubic graph.

\noindent Let $u$ be an arbitrary vertex in $G$ with neighbors $u_1,u_2,u_3$. Since $G$ is claw-free, $\{u_1,u_2,u_3\}$ is not an independent set, and hence at least one pair among these vertices is adjacent. Moreover, $G$ is not isomorphic to $K_4$, since $K_4$ admits a $2$-tone edge $9$-coloring. Therefore, $\{u_1,u_2,u_3\}$ does not induce a triangle in $G$.

\noindent We claim that exactly one pair among $\{u_1,u_2,u_3\}$ is adjacent. Otherwise, suppose that one vertex among $\{u_1,u_2,u_3\}$ is adjacent to the other two; without loss of generality, assume that $u_2$ is adjacent to both $u_1$ and $u_3$. Let $G'=G-u$. By the minimality of $G$, the graph $G'$ admits a $2$-tone edge $11$-coloring $f$. By Lemma~\ref{l5}, $f$ extends to the edges $uu_1$, $uu_2$, and $uu_3$, yielding a contradiction.

\noindent Thus, without loss of generality, we may assume that $u_1u_2\in E(G)$ and that $u_1u_3,u_2u_3\notin E(G)$. Let $G' = G - u$. By the minimality of $G$, $G'$ admits a $2$-tone edge $11$-coloring $f$. Let $u_1'$ and $u_2'$ denote the third neighbors of $u_1$ and $u_2$ in $G$, respectively. We proceed to extend $f$ to $uu_1, uu_2,$ and $uu_3$ in this order.

\noindent Suppose, without loss of generality, that $f(u_1u_2)=12$ and $f(u_1u_1')=34$. Since $d_G(u_1u_1',u_2u_2')\le2$, the label $f(u_2u_2')$ contains a color different from both $3$ and $4$; without loss of generality, assume that it contains $5$. Thus, at least one color from $\{1,2,3,4\}$ is free at $uu_2$; let this color be $3$. Consequently, we assume $f(u_2u_2')$ is either $45$ or $56$.

\noindent It follows that the colors $5,6,7,8,9$ are free at $uu_1$. Consider the family $L=\{56, 57, 58, 59\}$ of pairwise intersecting candidate labels for $uu_1$. Since $\eta_G(uu_1)\le 3$, Corollary~\ref{x} ensures that at least one label in $L$ is valid for $uu_1$. Without loss of generality, we assign $59$ to $uu_1$.

\noindent We next consider the edge $uu_2$. Since the color $5$ appears on both $uu_1$ and $u_2u_2'$, it follows that $f_r(uu_2)\ge 6$. In particular, the colors $3,7,8,10,11$ are free at $uu_2$. Let $L'=\{37,38,310,311\}$ be a family of pairwise intersecting candidate labels for $uu_2$. Let $e$ and $e'$ denote the $uu_3$-edges. By Remark \ref{r2}, at most one of $e$ and $e'$ can be a forbidding edge for $uu_2$ under $L'$.

\noindent Assume, without loss of generality, that $e$ is a forbidding edge for $uu_2$ under $L'$. Since $f(u_1u_1')=34 \notin L'$, and since $uu_2$ has at most two intermediate vertices distinct from $u_1$, Lemma~\ref{l1} implies that at least two labels from $L'$ are valid for $uu_2$. Without loss of generality, assume that these labels are $310$ and $311$. In both labels, $uu_2$ and $e$ share the color $3$. Thus, we have $f_r(uu_3) \ge 4$, yielding at least six candidate labels for $uu_3$.

\noindent Observe that neither $u_1u_1'$ nor $u_2u_2'$ forbids any of these candidate labels for $uu_3$, since $f(u_1u_1')$ contains the color $5$ and $f(u_2u_2')$ contains the color $3$, neither of which appears among the free colors at $uu_3$. Therefore, $uu_3$ has at most five forbidding edges, and hence at least one of its candidate labels is valid. Thus, $f$ extends to the three edges, a contradiction.

\noindent Thus, neither $e$ nor $e'$ forbids a label from $L'$ for $uu_2$. It follows that at least three labels in $L'$ are valid for $uu_2$; assume these are $38, 310,$ and $311$. If any of the colors $3,8,10,11$ appears on $f(e)$ or $f(e')$, then again $f_r(uu_3)\ge 4$, yielding at least six candidate labels for $uu_3$, and as before, $uu_3$ has at most five forbidding edges. Therefore, at least one of these labels is valid, a contradiction. Hence, none of these colors appears on $f(e)$ or $f(e')$.

Now assign $38$ to $uu_2$. Then at least three colors are free at $uu_3$, two of which are $10$ and $11$; let the third be $a$. Let $L'' = \{a10, a11, 1011\}$. Observe that none of the labels of $u_1u_1'$, $u_1u_2$, and $u_2u_2'$ belong to $L''$. Moreover, $uu_3$ has at most two intermediate vertices distinct from $u_1$ and $u_2$. Hence, by Lemma \ref{l1}, at least one label in $L''$ is valid for $uu_3$, and hence $f$ extends to  $uu_1, uu_2,$ and $uu_3$, a contradiction. Thus, such a graph $G$ does not exist.\end{proof}

\begin{theorem}
Every $2$-degenerate subcubic graph is $2$-tone edge $10$-colorable.
\end{theorem}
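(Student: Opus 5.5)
We argue by minimal counterexample, in the same way as the proof of Theorem~\ref{t5}. Let $G$ be a $2$-degenerate subcubic graph that is not $2$-tone edge $10$-colorable, chosen with the minimum number of vertices. Since $G$ is $2$-degenerate, it contains a vertex $u$ with $d_G(u)\le 2$. We may assume $d_G(u)\ge 1$; otherwise we delete the isolated vertex $u$ and use minimality directly.

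Set $G'=G-u$. Every subgraph of a $2$-degenerate subcubic graph is again $2$-degenerate and subcubic, so by the minimality of $G$ the graph $G'$ has a $2$-tone edge $10$-coloring $f$. By Remark~\ref{r1}, $f$ is a partial $2$-tone edge $10$-coloring of $G$, and the only uncolored edges are those incident with $u$. Two cases remain.

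\begin{enumerate}[label=(\roman*)]
\item If $d_G(u)=1$, then $10\ge 7$, so Lemma~\ref{l3} extends $f$ to the single uncolored edge at $u$.
\item If $d_G(u)=2$, then both edges at $u$ are uncolored, and Lemma~\ref{l4}, stated exactly for partial $2$-tone edge $10$-colorings of subcubic graphs, extends $f$ to both of them.
\end{enumerate}

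In either case we obtain a $2$-tone edge $10$-coloring of $G$, which contradicts the choice of $G$.

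There is essentially no obstacle, because the local extension work has already been done in Lemmas~\ref{l3} and~\ref{l4}. The one point to check is that the hypotheses of those lemmas hold in $G$ itself. The partial coloring must be valid with respect to distances in $G$, and Remark~\ref{r1} gives this since $G'$ is obtained by deleting a vertex. The lemmas also require $G$ to be subcubic, which it is by assumption; they do not need $G$ to be cubic.
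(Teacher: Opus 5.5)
Your proof is correct and follows the paper's argument: delete a vertex $u$ of degree at most two from a minimum counterexample, then extend the coloring of $G-u$ using Lemma~\ref{l4}. Your separate handling of $d_G(u)\in\{0,1\}$ is slightly more careful than the paper, which invokes only Lemma~\ref{l4} and tacitly assumes $u$ has two neighbors; your appeal to Lemma~\ref{l3} is legitimate, since that lemma is stated for $7$ colors but its proof works verbatim for any number of colors at least $7$, and the paper uses it that way elsewhere.
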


\begin{proof}
Suppose, for a contradiction, that the theorem is false, and let $G$ be a minimum counterexample. Since $G$ is $2$-degenerate, there exists a vertex $u\in V(G)$ with $d_G(u)\le2$. Let $v$ and $w$ denote the neighbors of $u$ in $G$, and set $G'=G-u$. Observe that $G'$ is also $2$-degenerate and subcubic. By the minimality of $G$, the graph $G'$ admits a $2$-tone edge $10$-coloring $f$. By Remark \ref{r1}, $f$ is a partial $2$-tone edge $10$-coloring of $G$. Then, by Lemma~\ref{l4}, $f$ extends to the uncolored edges $uv$ and $uw$, yielding a contradiction.\end{proof}

We conjecture the following.

\begin{conjecture}\label{c1}
If $G$ is a cubic graph, then $\tau'_2(G)\le 9$.
\end{conjecture}

The bound in Conjecture \ref{c1} is the best possible, since $\tau'_2(K_4)=9$. Moreover, we believe that $K_4$ is the unique graph requiring nine colors. Motivated by this observation, we further propose the following stronger conjecture.

\begin{conjecture}\label{c2}
If $G$ is a cubic graph that does not contain $K_4$, then $\tau'_2(G)\le 8$.
\end{conjecture}

The bound is sharp, since $\tau'_2(K_4-e)=8$. A natural follow-up question is whether excluding $K_4-e$ as a subgraph allows a smaller bound. Computational experiments suggest otherwise: the graph in Figure~\ref{f2} is the smallest $K_4-e$-free graph satisfying $\tau'_2(G)=8$.

\begin{figure}[ht]
\centering
\begin{tikzpicture}[
    scale=0.6,
    every node/.style={
        circle,
        draw=black,
        fill=white,
        thick,
        minimum size=2mm,
        inner sep=0pt
    },
    every path/.style={black, thick}
]

\node (8)  at (0,4)   {};
\node (4)  at (3.4,3.8) {};
\node (1)  at (6.3,2.3) {};
\node (2)  at (5.2,-1.4) {};
\node (3)  at (4.8,0.7) {};
\node (6)  at (4.4,2.1) {};
\node (7)  at (1.8,0.2) {};
\node (5)  at (1.7,-1.6) {};
\node (9)  at (-0.5,0.5) {};
\node (10) at (0.8,2.1) {};

\draw (8) -- (4);
\draw (4) -- (1);
\draw (1) -- (2);
\draw (2) -- (3);
\draw (3) -- (1);
\draw (3) -- (6);
\draw (6) -- (10);
\draw (10) -- (8);
\draw (8) -- (9);
\draw (9) -- (10);
\draw (9) -- (5);
\draw (5) -- (2);
\draw (5) -- (7);
\draw (7) -- (4);
\draw (7) -- (6);

\end{tikzpicture}
\caption{The smallest $K_4-e$-free graph with $2$-tone chromatic index $8$.}
\label{f2}
\end{figure}

We now prove that every series-parallel subcubic multigraph of order $n \ge 2$ satisfies Conjecture \ref{c1}. Such graphs can be constructed from a single vertex with a loop by repeatedly applying the following two operations: \emph{(i) series operations}, which subdivide an edge and insert a vertex, and  \emph{(ii) parallel operations}, which add a new edge in parallel with an existing one.  

This construction guarantees that the resulting graph has minimum degree at least two. Moreover, since a loop contributes two to the degree of its incident vertex, any series-parallel subcubic multigraph with at least two vertices must be loopless.

\begin{theorem}
Every series-parallel subcubic multigraph of order $n \ge 2$ admits a $2$-tone edge $9$-coloring.
\end{theorem}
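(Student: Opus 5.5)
We argue by a minimal counterexample. Let $G$ be a series-parallel subcubic multigraph of order $n\ge 2$ with no $2$-tone edge $9$-coloring, chosen with the minimum number of vertices and, subject to this, the minimum number of edges. As noted before the statement, $G$ is loopless and $\delta(G)\ge 2$. Since parallel edges are adjacent, their labels must be disjoint. Edges joining the same pair of vertices are at distance one, so the definitions of free colors, candidate labels, intermediate vertices and $\eta_G$ carry over verbatim. Lemma~\ref{p1}, Lemma~\ref{l1} and Corollary~\ref{x} only use these local notions, so they remain valid in the multigraph setting. We also check that Remark~\ref{r1} still holds: deleting a vertex, or suppressing a degree-$2$ vertex by reversing a series operation, never decreases distances between the remaining edges in a way that invalidates the coloring, provided the new edge is handled appropriately.

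\textbf{Small cases.} For $n=2$, $G$ consists of two vertices joined by $2$ or $3$ parallel edges. Since these edges are pairwise adjacent, we assign pairwise disjoint labels $12,34,56$, which use only $6\le 9$ colors. We then use the recursive structure: the last operation applied in building $G$ was either a series or a parallel operation. Since operations can be reordered only loosely, we instead take a structural lemma. Every such $G$ with $n\ge 3$ contains either a vertex $u$ of degree two, or a pair of parallel edges $xy$ with $d(x)=d(y)=3$.

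\textbf{Vertex of degree two.} Suppose $u$ has degree two with neighbors $v,w$, where possibly $v=w$ if $u$ carries a double edge. Consider the case $v\ne w$ and delete $u$. Then $G-u$ is still series-parallel, and it may have vertices of degree one. We therefore need a version of the theorem for $\delta\ge 1$, so we strengthen the induction to all series-parallel subcubic multigraphs obtained by deleting vertices. Lemma~\ref{l4}, proved for $10$ colors in the simple case, must be redone with $9$ colors. At $uv$ at most $2\cdot 2=4$ colors are blocked, leaving $5$ free colors and a pairwise intersecting family of size $4$. The number of intermediate vertices is at most $2$, which gives at least $2$ choices. Then at $uw$ at least $9-4-2=3$ free colors remain. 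This gives $3$ intersecting labels against at most $3$ intermediate vertices, which is tight. We resolve this by choosing the label of $uv$ among its valid options so that it shares a color with a $uw$-edge, or so that it frees a color, mimicking the second half of the proof of Lemma~\ref{l4}. This tight count is where the series-parallel structure must be used: a degree-two vertex on a path of degree-two vertices, or on a short cycle, yields additional slack. The case $v=w$, a digon hanging off $v$ with $d(v)=3$, is handled by deleting $u$ and coloring the two parallel edges with disjoint labels from the at least $7$ free colors, choosing them to avoid the labels at distance two.

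\textbf{Parallel pair between degree-three vertices, the main obstacle.} Suppose $x,y$ are joined by two parallel edges and have third neighbors $x',y'$. We delete $x$ and $y$ (or contract the pair into a single edge $x'y'$, which preserves the series-parallel property) and color the result by minimality. We then extend in the order $xx'$, $yy'$, and finally the two parallel edges. The two parallel edges are each adjacent to the other three edges, so they need $4$ colors disjoint from those of $xx'$ and $yy'$, with only $9$ colors available. After $xx'$ and $yy'$ receive labels, ideally sharing a color so that only $3$ colors are used, $6$ colors remain for the two disjoint labels. The second-neighbor constraints come only from the edges at $x'$ and $y'$, at most $4$ labels. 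We show that some pairing of $4$ of the remaining colors avoids them. With $6$ colors there are $45$ ordered disjoint label pairs, which is plenty. The real difficulty is forcing $xx'$ and $yy'$ to share a color while respecting the labels at $x'$ and $y'$. If contracting to $x'y'$ is used, then the label of $x'y'$ can be reused on both $xx'$ and $yy'$, which are at distance $2$. This is forbidden, since equal labels are not allowed at distance $2$. So instead we recolor carefully, and I expect this case analysis to be the main work.
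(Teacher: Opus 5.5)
Your proposal does not yet prove the statement. Both main cases are left open, and in the degree-two case the counting you sketch does not close with $9$ colors.

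Deleting a degree-two vertex $u$ (with neighbors $v,w$) throws away everything the coloring knew about $uv$ and $uw$. You are then in the setting of Lemma~\ref{l4} with one color fewer. In that setting the edge $uv$ can have three intermediate vertices ($w$ and the two other neighbors of $v$), not two. So your four pairwise intersecting labels guarantee only \emph{one} valid label for $uv$, and the mechanism of Lemma~\ref{l4}, which relies on having two valid options to choose between, is lost. The edge $uw$ then faces three labels against three intermediate vertices. Your appeal to ``additional slack'' from the series-parallel structure is not an argument. Deleting vertices also takes you out of the class (degree-one vertices, graphs not buildable by the operations), so your structural lemma would have to be re-established for an enlarged class. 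Finally, the parallel-pair case is explicitly deferred to a case analysis you have not done.

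The missing idea is to undo the last construction operation instead of deleting vertices.

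\emph{Parallel operation.} Remove only the new edge $e$. In the full graph $e$ is adjacent to at most three edges, so at least three colors are free, giving three pairwise intersecting candidates against $\eta(e)\le 2$, and Corollary~\ref{x} finishes. There is no need to delete $x,y$ or to force $xx'$ and $yy'$ to share a color.

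\emph{Series operation subdividing $uv$ by $x$.} Color the graph \emph{with} $uv$ present. Subdividing does not decrease distances, so this restricts to a partial coloring. If $f(uv)=12$, then $1$ and $2$ are free at both $ux$ and $xv$, since every other edge adjacent to them was adjacent to $uv$. Take three further free colors $7,8,9$ at $ux$.
\begin{itemize}
\item If one of $78,79,89$ is valid at $ux$, use it and give $xv$ the label $12$. This is valid because every second neighbor of $xv$ was within distance two of $uv$.
\item Otherwise, each of the three intermediate vertices $u_1,u_2,v$ of $ux$ carries exactly one of these labels; say $f(vv_1)=89$. Then some label in $\{18,19,28,29\}$ is valid at $ux$: edges at $v$ avoid $1,2$, and at each $u_i$ one edge already carries $78$ or $79$, so at most one other edge there can forbid. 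Assign it, say $28$.
\end{itemize}
In the second case, color $8$ now appears twice around $xv$, so at least three colors $a,b,c$ other than $1$ are free at $xv$. The family $\{a1,b1,c1\}$ beats the at most two intermediate vertices of $xv$ other than $u$, since edges at $u$ avoid color $1$. This is the paper's argument, and it needs no vertex-deleted graphs and no parallel-pair case analysis.
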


\begin{proof}
We proceed by induction on the number of series and parallel operations. If the graph consists of exactly two parallel edges, then it clearly admits the required coloring.

\noindent Now assume that at some stage we have a series-parallel subcubic multigraph $G$ equipped with a valid $2$-tone edge $9$-coloring $f$. Let $G'$ be a graph obtained from $G$ by performing either a series or a parallel operation. We show that $G'$ also admits the required coloring. Note that $f$ is a partial $2$-tone edge $9$-coloring of $G'$.

\noindent First, consider the case where a parallel edge $e$ is added to an existing edge $uv$ of $G$. At least three colors are free at $e$, yielding three pairwise intersecting candidate labels. Since $\eta_G(e)\le 2$, Corollary~\ref{x} ensures that $f$ extends to $e$.

\noindent Now assume that $G'$ is obtained from $G$ by subdividing an edge $uv$ with a new vertex $x$. Assume, without loss of generality, that $f(uv)=12$. Then both colors $1$ and $2$ are free at $ux$ and $xv$.

\noindent First, consider the edge $ux$. Since $xv$ is not yet colored, at least three colors distinct from $1$ and $2$ are free at $ux$. Suppose, without loss of generality, that these colors are $7,8,9$. Then $78$, $79$, and $89$ are candidate labels for $ux$. If at least one of these labels does not appear on a second neighbor of $ux$, then we assign this label to $ux$ and then assign $12$ to $xv$, obtaining a $2$-tone edge $9$-coloring of $G'$.

\noindent Thus, we may assume that each of the labels $78$, $79$, and $89$ appears on a second neighbor of $ux$. Hence, there exist two distinct neighbors $u_1$ and $u_2$ of $u$ distinct from $x$, and a neighbor $v_1$ of $v$ distinct from $x$, such that $vv_1$ is assigned one of the labels in $\{78,79,89\}$ and the remaining two labels are then assigned to a $u_1u$-edge and a $u_2u$-edge, respectively. Without loss of generality, suppose that $f(vv_1)=89$.

\noindent We aim to assign $ux$ a label containing $8$ or $9$. Let $L=\{18, 19, 28, 29\}$ be the set of candidate labels for $ux$. No $xv$-edge forbids a label from $L$ for $ux$, while at most one $uu_1$-edge and at most one $uu_2$-edge can forbid a label from $L$ for $ux$. Consequently, at most two second neighbors of $ux$ may carry labels from $L$. Thus, at least one of these labels can be assigned to $ux$; without loss of generality, we assign $28$ to $ux$.

\noindent Finally, consider the edge $xv$. At least three colors other than $1$ are free at $xv$; denote them by $a$, $b$, and $c$. Consider $L=\{a1,b1,c1\}$. No $xu$-edge forbids labels from $L$ for $xv$. Moreover, $xv$ has at most two intermediate vertices distinct from $u$. Hence, by Lemma \ref{l1}, at least one label in $L$ is valid for $xv$.\end{proof}

Since every series-parallel subcubic multigraph is $K_4$-minor-free, and hence $K_4$-free, Conjecture \ref{c2}, if true, would imply that this bound is not sharp.

We now prove that every subcubic outerplanar graph satisfies Conjecture \ref{c2}. An outerplanar graph is a graph that admits a planar embedding in which every vertex lies on the boundary of the outer face. In such an embedding, faces enclosed by a finite cycle are called bounded faces.

Given an outerplane graph $G$, the \emph{weak dual} $\tau(G)$ is the graph whose vertices correspond to the bounded faces of $G$, with two vertices adjacent whenever the corresponding faces share an edge. For a bounded face $F$ of $G$, we denote by $C(F)$ the cycle of $G$ that bounds $F$. It is well known that a plane graph is outerplane if and only if its weak dual is a forest. Let $G$ be a subcubic outerplane graph. A bounded face $F$ of $G$ is called a \emph{pendant} face if $F$ corresponds to a leaf of $\tau(G)$ and $C(F)$ contains either exactly one vertex of degree three or exactly two consecutive vertices of degree three. 

\begin{theorem}\label{t7}
Every subcubic outerplanar graph $G$ admits a $2$-tone edge $8$-coloring.
\end{theorem}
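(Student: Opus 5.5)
We argue by minimal counterexample. Let $G$ be a subcubic outerplanar graph with no $2$-tone edge $8$-coloring and with the fewest vertices. Then $G$ is connected. By Lemma~\ref{l3}, $G$ has no vertex of degree one: delete the vertex, color the rest by minimality, and extend using Remark~\ref{r1}.

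For vertices of degree two, Lemma~\ref{l4} needs $10$ colors, so we need a sharper local analysis. Let $u$ have degree two with neighbors $v,w$, and color $G-u$. If $vw\in E(G)$, or if one of $v,w$ has degree two, the uncolored edges $uv,uw$ have few colored neighbors and few second neighbors. In these cases we expect to extend directly: first color one edge from a pairwise intersecting family and apply Lemma~\ref{l1}, then handle the second edge as in the last step of Lemma~\ref{l4}. There we pair a color left free by the first label with the remaining free colors, and observe that the $uv$-edges cannot forbid labels containing that color. Counting gives at least $8-4=4$ free colors at $uv$, hence $3$ intersecting labels against at most $2$ intermediate vertices. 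So this first step is fine. The delicate case is when $v$ and $w$ both have degree three and $vw\notin E(G)$. Here, at the second edge, only about $8-6=2$ free colors are guaranteed, which is not enough. We therefore want to rule this configuration out by finding a reducible structure that avoids it, or handle it with recoloring.

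This is where the outerplanar structure enters. Fix an outerplane embedding. Since $G$ is $2$-connected-ish after the reductions (no degree-one vertices; cut-vertices can be handled by coloring blocks and permuting colors, or avoided by choosing a leaf block), the weak dual $\tau(G)$ is a forest. We would show that a leaf of $\tau(G)$, in a suitable leaf block, yields a pendant face $F$ in the sense just defined. Subcubicity forces the cycle $C(F)$ to meet the rest of the graph through a single chord $xy$, so the degree-three vertices on $C(F)$ are $x$ and $y$, possibly together with $x$ alone if the face hangs at a cut-vertex. The remaining vertices of $C(F)$ form a path $P$ of degree-two vertices. If $|P|\ge2$, we delete an interior degree-two vertex whose neighbor also has degree two, which is the easy case above. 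So we reduce to short pendant faces: a triangle $xyz$ with $z$ of degree two, or a $4$-cycle $xyz_2z_1$ with $z_1,z_2$ of degree two. We also treat a face with a single degree-three vertex, such as a triangle attached at a cut-vertex.

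The main obstacle is these small pendant faces, especially the triangle on the chord $xy$ with both $x,y$ of degree three; the $4$-face is similar. For the triangle, delete $z$. The edges $zx,zy$ are adjacent to each other, to $xy$, and to the outer edges $xx',yy'$. The label $f(xy)=12$ is blocked from both. The labels $f(xx')$ and $f(yy')$ are at distance at most $2$, so they are distinct. Each new edge then has at least $8-4=4$ free colors, with second neighbors only at $x'$ and $y'$. We would color $zx$ from an intersecting family built around a color lying in $f(yy')\setminus f(xx')$, which is free at $zx$ and not at $zy$. As in Lemma~\ref{l5}, this makes sure that $zx$ does not consume the free colors at $zy$. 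We then finish $zy$ with Lemma~\ref{l1}, using at most two intermediate vertices. If some case still fails, the fallback is to uncolor $xy$ as well, deleting $z$ and recoloring the three triangle edges in a suitable order, as in Lemma~\ref{l5}. I expect the careful case counting in the $4$-cycle pendant face, and in pendant faces at cut-vertices, to be the bulk of the work.
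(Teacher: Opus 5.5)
Your overall route is the paper's: a minimal counterexample with no degree-one vertices, a pendant face, long pendant faces reduced by deleting a degree-two vertex, and the triangle $xyz$ with $d_G(x)=d_G(y)=3$. You handle that triangle by giving $zx$ a label containing a color of $f(yy')\setminus f(xx')$ and finishing $zy$ by Lemma~\ref{l1}; this is exactly the paper's argument and it is correct.

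The gap is that the remaining pendant faces, which you defer as ``the bulk of the work'', are never handled, and the mechanism you sketch for them is miscounted. Take the $4$-cycle $xyz_2z_1$ with $z_1$ deleted. The edge $z_1x$ has three intermediate vertices ($z_2$, $y$ and $x'$), not two, so three intersecting labels and Lemma~\ref{l1} give nothing. If you color $z_1z_2$ first instead, only $8-6=2$ colors are guaranteed free at $z_1x$.

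No new idea is needed here, only the right order and plain counting, as in the paper's claim that $\ell=3$:
\begin{enumerate}
\item Color $z_1x$ first, while $z_1z_2$ is uncolored. At least $4$ colors are free, giving $6$ candidate labels. There are at most $4$ second neighbors: $z_2y$, $yy'$, and the two other edges at $x'$.
\item Then color $z_1z_2$. Again at least $4$ colors are free and there are $6$ candidates, against the three second neighbors $xy$, $xx'$, $yy'$.
\end{enumerate}
The triangle $xz_1z_2$ in which only $x$ has degree three is equally routine. Delete $z_1$, color $z_1x$ (at most two second neighbors), then $z_1z_2$ (one second neighbor). The paper instead deletes both $z_1$ and $z_2$ and writes down explicit labels $47,16,35$.

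Your justification for the existence of a pendant face also needs repair, although the paper itself simply assumes one exists. It is false that an arbitrary leaf face of $\tau(G)$ meets the rest of $G$ through a single chord: other vertices of $C(F)$ may have degree three because of edges leading to further blocks. ``Coloring blocks and permuting colors'' is unjustified as stated. A permutation would have to make the labels at the cut vertex pairwise disjoint and keep every pair of edges at distance two across it distinct, and you give no argument that one exists.

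It is also unnecessary, since every reduction is a single vertex deletion. Instead argue as follows:
\begin{enumerate}
\item Take a leaf block $B$. It is not a bridge, because $G$ has no degree-one vertex.
\item Its cut vertex $c$, if any, has degree two in $B$, so it lies on exactly one bounded face of $B$.
\item If $B$ is a cycle, it is itself a pendant face whose only degree-three vertex is $c$.
\item Otherwise $\tau(B)$ is a tree with at least two leaves, and one of them avoids $c$. That leaf face has exactly the two chord endpoints as its degree-three vertices, and they are consecutive.
\end{enumerate}
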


\begin{proof} 
Suppose, to the contrary, that the theorem is false. Let $G$ be a counterexample with the minimum number of vertices. Clearly, $G$ is connected. Moreover, $G$ has no vertex of degree one. Otherwise, suppose that $u$ is a vertex of degree one in $G$ with neighbor $v$, and let $G' = G - u$. By the minimality of $G$, $G'$ admits a $2$-tone edge $8$-coloring $f$. By Lemma \ref{l3}, $f$ extends to the uncolored edge $uv$, yielding a $2$-tone edge $8$-coloring of $G$, a contradiction.

\noindent The graph $G$ must contain at least two bounded faces; otherwise, $G$ is a cycle, which admits a $2$-tone edge $8$-coloring by Corollary \ref{cc1}, a contradiction. Let $F$ be a pendant face in $G$ with $C(F)=v_1v_2\cdots v_\ell$, where $\ell\ge 3$.

\begin{claim}
$\ell= 3$.
\end{claim}

\begin{proof}
Suppose, for the sake of contradiction, that $\ell \ge 4$. By the definition of a pendant face, we may assume, without loss of generality, that either both $v_1$ and $v_\ell$ have degree three in $G$, or only $v_1$ has degree three. Let $G' = G - \{v_3\}$. By the minimality of $G$, the graph $G'$ admits a $2$-tone edge $8$-coloring $f$, which is a partial coloring of $G$ by Remark \ref{r1}. 

\noindent We sequentially color the edges in $\{v_2v_3,v_3v_4\}$, beginning with $v_3v_4$ when $\ell=4$. For each chosen edge $e$, at least four colors are free at $e$, yielding at least six candidate labels, while $e$ has at most three second neighbors. Hence, $f$ extends to both edges, producing a $2$-tone edge $8$-coloring of $G$, a contradiction.\end{proof}

\noindent Thus, $F$ is a triangle. Suppose first, without loss of generality, that $d_G(v_1)=3$ and $d_G(v_2)=d_G(v_3)=2$. Let $G'=G-\{v_2,v_3\}$. By the minimality of $G$, $G'$ admits a $2$-tone edge $8$-coloring $f$. Let $y$ be the neighbor of $v_1$ in $G'$, and let $y_1$ be a neighbor of $y$ in $G$. The vertex $y$ has at most one additional neighbor; denote it (if it exists) by $y_2$. Without loss of generality, assume that $f(v_1y)=12$, $f(yy_1)=34$, and $f(yy_2)=56$ (if $y_2$ exists). Assign the labels $47$, $16$, and $35$ to the edges $v_1v_2$, $v_2v_3$, and $v_1v_3$, respectively. This yields a $2$-tone edge $8$-coloring of $G$, a contradiction.

\noindent Therefore, we may assume that $d_G(v_1)=d_G(v_2)=3$ and $d_G(v_3)=2$. Let $G'=G-\{v_3\}$. By the minimality of $G$, $G'$ admits a $2$-tone edge $8$-coloring $f$. By Remark \ref{r1}, $f$ is a partial coloring of $G$. Let $y_1$ and $y_2$ be the neighbors of $v_1$ and $v_2$, respectively, outside $C(F)$. Without loss of generality, assume that $f(v_1v_2)=12$ and $f(v_1y_1)=34$. Since $d_G(v_1y_1, v_2y_2)\le 2$, the label $f(v_2y_2)$ contains a color distinct from both $3$ and $4$; without loss of generality, assume it contains $5$.

\noindent We aim to assign to the edge $v_1v_3$ a label containing the color $5$. Observe that the colors $5,6,7,8$ are free at $v_1v_3$. Among the candidate labels for $v_1v_3$, consider the pairwise intersecting family $L=\{56,57,58\}$. Since $\eta_G(v_1v_3)\le 2$, it follows from Corollary \ref{x} that one of the labels in $L$ is valid for $v_1v_3$.

\noindent Finally, consider the edge $v_2v_3$. At least three colors are free at $v_2v_3$, yielding three candidate labels that are pairwise intersecting. Since $\eta_G(v_2v_3)\le 2$, it follows from Corollary \ref{x} that one of these candidate labels is valid for $v_2v_3$, a contradiction.\end{proof}

For subcubic outerplanar graphs $G$, Theorem \ref{t4} gives $\tau'_2(G)\le 14$, whereas Theorem \ref{t7} reduces this bound to $8$.

\paragraph{Acknowledgment.}
The author would like to thank Prof. Olivier Togni for his encouragement and support throughout this work. The author is also grateful to him for carefully reading the manuscript, providing valuable remarks, and carrying out the computational experiments leading to the counterexample in Figure \ref{f2}, including the verification that no smaller counterexample exists.


\begin{thebibliography}{99}

\bibitem{B}
J. Balogh, M. Kochol, A. Pluh\'{a}r, X. Yu,
Covering planar graphs with forests,
J. Combin. Theory Ser. B 94 (2005) 147--158. https://doi.org/10.1016/j.jctb.2004.12.002.

\bibitem{B1}
A. Bickle,
$2$-tone coloring of joins and products of graphs,
Congr. Numer. 217 (2013) 171--190.

\bibitem{BP}
A. Bickle, B. Phillips,
$t$-tone colorings of graphs,
Util. Math. 106 (2018) 85--102.

\bibitem{C1}
D.W. Cranston, J. Kim, W.B. Kinnersley,
New results in $t$-tone coloring of graphs,
Electron. J. Combin. 20 (2013), P17.

\bibitem{Dong1}
J. Dong,
$2$-tone coloring of graphs with maximum degree 4,
Util. Math. 107 (2018) 19--22.

\bibitem{E}
P. Erd\H{o}s, C. Ko, R. Rado,
Intersection theorems for systems of finite sets,
Quart. J. Math. Oxford Ser. (2) 12 (1961) 313--320.

\bibitem{FA}
I. Fabrici,
Light graphs in families of outerplanar graphs,
Discrete Math. 307 (2007) 866--872. https://doi.org/10.1016/j.disc.2005.11.058.


\bibitem{F}
N. Fonger, J. Goss, B. Phillips, C. Segroves,
Math 6450: Final Report,
Unpublished manuscript, 2009.
Available at:
\url{https://homepages.wmich.edu/zhang/finalReport2.pdf}

\bibitem{Kang}
R.J. Kang, P. Manggala,
Distance edge-colourings and matchings,
Discrete Appl. Math. 160 (2012) 2435--2439.

\bibitem{KS}
F. Kramer, H. Kramer,
A survey on the distance coloring of graphs,
Discrete Math. 308 (2008) 422--426.
\end{thebibliography}
\end{document}